\documentclass{article}

\usepackage{authblk}

\usepackage[utf8]{inputenc}%       encodage du fichier source
\usepackage[T1]{fontenc}%          gestion des accents (pour les pdf)
\usepackage[english]{babel}
\usepackage{textcomp}%             caractères additionnels
\usepackage{amsmath,amssymb,amsthm}% packages ams
\usepackage{stmaryrd} %pour les intervalles d’entiers
\usepackage{lmodern}%              remplacer éventuellement par txfonts, fourier, etc.
\usepackage[a4paper]{geometry}%    taille correcte du papier
\usepackage{xcolor, pict2e}%               pour gérer les couleurs
\usepackage{microtype}%            améliorations typographiques
\usepackage{caption}
\usepackage{listings}
\usepackage{multicol}
\usepackage{moreverb}
\usepackage{hyperref}%             gestion des hyperliens
\hypersetup{pdfstartview=XYZ}%     zoom par défaut
\usepackage{wrapfig}%				texte coulant autour des figures
\usepackage[sans]{dsfont}

\usepackage{mathrsfs}
\usepackage{tikz, pgfplots}
\usetikzlibrary{positioning}
\usepackage{amsmath,amssymb}
\usetikzlibrary{decorations.pathreplacing}

\usepackage{ytableau}

\usepackage{mathdots}
\usepackage{hyperref}%             gestion des hyperliens
\hypersetup{
    colorlinks=true,
    linkcolor=blue,
    citecolor=magenta,
    urlcolor=blue,
    pdfborder={0 0 0}
}

\definecolor{fond}{rgb}{0.05,0.05,0.25}

\DeclareMathOperator{\Cay}{Cay}

\DeclareMathOperator{\ch}{\mathrm{ch}}
\DeclareMathOperator{\Id}{\mathrm{Id}}

\newcommand{\Hap}{\ensuremath{\Xi}}
\newcommand{\hap}{\ensuremath{\xi}}

\newcommand{\kS}{{\ensuremath{\mathfrak{S}}} }

\newcommand{\kY}{{\ensuremath{\mathfrak{Y}}} }

\usepackage[font=sf, labelfont={sf,bf}, margin=1cm]{caption}
\usepackage{enumitem}
\setlist[itemize,1]{nosep}
\setlist[enumerate,1]{itemsep=0pt,label=(\alph*)}

\usepackage{float}
\usepackage[caption = false]{subfig}
\usepackage{graphicx}

\newtheorem*{theorem*}{Theorem}
\newtheorem{theorem}{Theorem}[section]
\newtheorem{proposition}[theorem]{Proposition}
\newtheorem{lemma}[theorem]{Lemma}
\newtheorem{corollary}[theorem]{Corollary}

\newtheorem{remark}[theorem]{Remark}

\usepackage{todonotes}

\newcommand{\cS}{{\ensuremath{\mathcal S}} }
\newcommand{\cT}{{\ensuremath{\mathcal T}} }

\newcommand{\bbR}{{\ensuremath{\mathbb R}} }

\newcommand{\bbP}{{\ensuremath{\mathbb P}} }

\newcommand{\ag}{\left\{ } %accolade gauche
\newcommand{\ad}{\right\} }

\newcommand{\pg}{\left(} %parenthèse gauche
\newcommand{\pd}{\right)}
\newcommand{\bg}{\left|}
\newcommand{\bd}{\right|}
\newcommand{\lf}{\left\lfloor}
\newcommand{\rf}{\right\rfloor}
\newcommand{\lc}{\left\lceil}
\newcommand{\rc}{\right\rceil}

\newcommand{\du}{{\ensuremath{\;:\;}}} %\dp n'est pas accepté

\DeclareMathOperator{\sgn}{sgn}

\makeatletter
\newcommand*\bigcdot{\mathpalette\bigcdot@{.5}}
\newcommand*\bigcdot@[2]{\mathbin{\vcenter{\hbox{\scalebox{#2}{$\m@th#1\bullet$}}}}}
\makeatother

\numberwithin{equation}{section}

\pgfplotsset{compat=1.18}

\usepackage{todonotes}
\usepackage{comment}

\newcommand{\bigpar}[1]{\left({#1}\right)}
\def \l{\lambda}

\def\qnl{q_{n,\ell}}
\def\Sn{\ensuremath{\mathfrak{S}_n}}

\def\nto{n\to\infty}

\def\lln{\lambda\vdash n}
\begin{document}

\renewcommand{\theparagraph}{\thesubsection.\arabic{paragraph}} 
\title{Very high genus double Hurwitz numbers via mixing times}

\author[1]{Baptiste Louf}
\author[2]{Lucas Teyssier}
\affil[1]{CNRS and Université de Toulouse,
\texttt{baptiste.louf@math.univ-toulouse.fr}}
\affil[2]{Université de Lorraine, \texttt{lucas.teyssier@univ-lorraine.fr}}
%Université de Lorraine, CNRS, IECL, F-54000 Nancy
\date{}

\maketitle
\begin{abstract}
    We obtain very high genus asymptotics for double Hurwitz numbers when both the size $n$ and the genus $g$ go to infinity, under the constraint that $g$ grows much faster than $n$, more precisely that $\frac{2g}{n}-\ln n\to\infty$. Our proof relies on adapting arguments about the mixing time of the transposition random walk introduced by Diaconis and Shahshahani.
\end{abstract}

\section{Introduction}
Hurwitz numbers count (equivalence classes of certain) ramified covers of the sphere by a surface of higher genus, or equivalently, factorizations of the identity in the symmetric group with constraints on the factors. They are connected to several research areas, such as the intersection theory of the moduli space of curves (via the ELSV formula~\cite{ELSV}), integrable hierarchies~\cite{Ok} or topological recursion~\cite{EMS}. They belong to the realm of \emph{enumerative geometry}, which consists, roughly speaking, in studying models of geometric surfaces parametrized by a size\footnote{sometimes the size is controlled by multiple parameters.} $n$, a genus $g$, and their attached “counting quantities”, either volumes or cardinalities. Other important examples are combinatorial maps (or ribbon graphs), hyperbolic surfaces (in particular under the Weil--Petersson measure), and (half-)translation surfaces.

A recent concern is to study these models in the “large genus limit” (see e.g.\ ~\cite{mirzakhani_weil-petersson_2010,guth_pants_2011,angel_local_2013,mirzakhani_lengths_2019,BudzLouf,Delecroix_Goujard_Zograf_Zorich_2022,anantharaman_friedman-ramanujan_2025,hide_spectral_2025}). Many questions arise, such as typical geometric behavior of random objects, but an important goal is to obtain asymptotics for the counting numbers attached to these models, which is often a necessary condition to study geometric features.

Large/high genus asymptotics is an umbrella term that can mean either $g\to\infty$ and fixed $n$, or both $n,g\to\infty$. Here we focus on the second case,  a question of \emph{bivariate asymptotics}, which often entails some serious difficulties. Indeed, for the moment, when both $n,g\to\infty$ without restriction, true asymptotic equivalents are only reachable when explicit algorithms~\cite{Agg2} or simple recurrence formulas~\cite{EPLFW,BarLou} are available, and for other models, only partial asymptotics has been obtained~\cite{BudzLouf,ChapuyLoufWalsh2024PlancherelHurwitz}.

Fortunately, there is a regime where things become more tractable and formulas are in general simpler: when both $n,g\to\infty$, but $g$ grows much faster that $n$. This has already been established for  Weil--Petersson volumes~\cite{MirZog} and for intersection numbers and volumes of quadratic differentials~\cite{Agg}. 

In this paper, we continue this trend with \emph{double Hurwitz numbers} $h_g(\mu, \lambda)$, which count ramified covers of the sphere by higher genus surfaces, or equivalently factorizations in the symmetric group (see Section~\ref{s: def Hurwitz} for a precise definition). Note that our $h_g(\mu,\lambda)$ might differ from other papers by an automorphism factor. We show that if the genus becomes very large with respect to the size (in a quantifiable way), then we have a simple asymptotic equivalent for these numbers.

Our main result is the following. 

\begin{theorem}\label{thm_main}
Let $(g_n)$ be a sequence of integers such that $\frac{2g_n}{n}-\ln n\to\infty$ as $n\to \infty$. As $n\to \infty$, uniformly over all integer partitions $\mu,\lambda$ of $n$, setting $k_n = 2g_n-2+\ell(\l)+\ell(\mu)$, we have the following asymptotic equivalent: 
  \[h_{g_n}(\mu,\lambda)\sim \frac{2n!}{z_\mu z_\lambda}\binom{n}{2}^{k_n}, \]
where the length $\ell(\nu)$ and the normalization factor $z_\nu$ of a partition $\nu$ are defined in Section~\ref{s: integer partitions}.
\end{theorem}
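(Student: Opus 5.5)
We start from the standard description of double Hurwitz numbers as factorizations in the symmetric group. By Riemann--Hurwitz, $h_g(\mu,\lambda)$ is, up to the normalization of Section~\ref{s: def Hurwitz}, $\frac{1}{n!}$ times the number of tuples $(\sigma,\tau_1,\dots,\tau_k,\rho)$ with $\sigma\in C_\mu$, $\rho\in C_\lambda$, each $\tau_i$ a transposition, and $\sigma\tau_1\cdots\tau_k\rho=\Id$, where $k=k_n$. We will first ignore the transitivity condition, i.e. work with disconnected Hurwitz numbers, and then show that the connected and disconnected counts are asymptotically equal. Since $|C_\mu|=n!/z_\mu$ and there are $\binom n2^{k}$ choices of transpositions, the count equals
\[
\frac{1}{n!}\cdot\frac{n!}{z_\mu}\cdot\binom n2^k\cdot\frac{n!}{z_\lambda}\cdot \bbP\bigl(\sigma X_k\in C_{\lambda}^{-1}=C_\lambda\bigr),
\]
where $X_k$ is the random transposition walk after $k$ steps and $\sigma\in C_\mu$ is fixed. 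The parity constraint forces $\sigma X_k$ into the coset of $A_n$ with sign $\sgn(\sigma)(-1)^k$, and this is exactly the sign of $C_\lambda$ because $k\equiv \ell(\lambda)+\ell(\mu)\pmod 2$. The claim is therefore equivalent to
\[
\bbP(\sigma X_k\in C_\lambda)\sim \frac{2|C_\lambda|}{n!},
\]
uniformly in $\mu,\lambda$. In words, the walk started at $\sigma$ is close to uniform on the correct coset of $A_n$ in a strong, pointwise-relative sense.

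To prove this we use the Fourier expansion over irreducible characters:
\[
\bbP(\sigma X_k=\pi)=\frac{1}{n!}\sum_{\rho\vdash n} d_\rho\, r_\rho^{\,k}\,\chi_\rho(\sigma^{-1}\pi), \qquad r_\rho=\frac{\chi_\rho(\tau)}{d_\rho}.
\]
The trivial character and the sign character each contribute $1/n!$, with matching signs on the correct coset, which gives the main term $2/n!$ pointwise. Summing over $\pi\in C_\lambda$ and dividing by $2|C_\lambda|/n!$, the relative error is bounded by
\[
\frac12\sum_{\rho\neq(n),(1^n)} d_\rho\,|r_\rho|^k\,\frac{|\chi_\rho(\cdot)|}{1}\le \frac12\sum_{\rho\neq(n),(1^n)} d_\rho^2|r_\rho|^k,
\]
using $|\chi_\rho|\le d_\rho$. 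This is exactly the $\ell^2$ (chi-square) bound of Diaconis--Shahshahani, but now evaluated at time $k\approx 2g$ rather than $\tfrac12 n\ln n$.

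Their estimate gives $\sum d_\rho^2 |r_\rho|^{k}\lesssim \sum_{j\ge1} \frac{n^{2j}}{j!^2} e^{-2jk/n}$. The extremal terms are $\rho=(n-1,1)$ with $r=1-2/n$, and its conjugate with $|r|=1-2/n$. Their contribution is about $n^2 e^{-2k/n}=\exp(2\ln n-2k/n)$. Since $k\ge 2g-2$, we have $2k/n-2\ln n\ge 2\bigl(\frac{2g}{n}-\ln n\bigr)-o(1)\to\infty$, so this contribution tends to $0$. Note that the hypothesis is exactly twice the usual cutoff $\tfrac12 n\ln n$. That is precisely what the $\ell^\infty$/$\ell^2$ bound needs, because we are controlling pointwise ratios on possibly tiny classes rather than total variation.

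The main obstacle is making the Diaconis--Shahshahani character ratio bounds rigorous for all $\rho$ at this time scale, including the handling of both ends ($\rho$ near $(n)$ and near $(1^n)$, via conjugation symmetry $r_{\rho'}=-r_\rho$). We will also need a uniform bound on the total sum, not only its leading terms. For this we will reuse their lemma bounding $|r_\rho|\le 1-\frac{2j(n-j+1)}{n^2}$-type estimates when $\rho_1=n-j$, together with $\sum_{\rho_1=n-j} d_\rho^2\le \binom nj^2 j!$. The remaining step is connectedness. The disconnected count minus the connected count is a sum over proper splittings of $[n]$, and the same asymptotic applied to each part shows that each splitting costs a factor of order $\binom{a}{2}^{k_1}\binom{n-a}{2}^{k_2}/\binom n2^{k}$. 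Since $k\gg n\ln n$, this is at most roughly $(1-\tfrac{2}{n})^{k}$ times the number of splittings, bounded by $2^n$, which is $o(1)$. We will check this crude splitting bound carefully, since the genus distribution among components must also be summed.
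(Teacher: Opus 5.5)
Your treatment of the count without transitivity is the paper's Proposition~\ref{prop_mixing}. The same Frobenius expansion and triangle inequality reduce it to $\sum_{\rho\neq(n),(1^n)}d_\rho^2|r_\rho|^k\to0$ for $k\geq 2g_n=n\ln n+n\omega_n$, and deriving this from Diaconis--Shahshahani's estimates rather than citing a profile theorem is legitimate. There is one normalization slip. With the paper's definition, your first display should read $\frac{n!}{z_\mu}\binom{n}{2}^{k}\,\bbP(\sigma X_k\in C_\lambda)$, without the factor $\frac{1}{n!}$ and without the extra $\frac{n!}{z_\lambda}$. Your reduction to $\bbP(\sigma X_k\in C_\lambda)\sim 2|C_\lambda|/n!$ is nevertheless correct.

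\textbf{The genuine gap is the connectedness step}, which is where the paper does most of its work. First, the hypothesis does not give $k\gg n\ln n$; it only gives $k\geq n\ln n+n\omega_n$ with $\omega_n\to\infty$ arbitrarily slowly. Even for $k\gg n\ln n$ your union bound fails: $2^n(1-\frac{2}{n})^k\approx 2^ne^{-2k/n}=2^nn^{-2}e^{-2\omega_n}\to\infty$, and it becomes $o(1)$ only once $k$ is of order $n^2$. You cannot charge every splitting the worst-case cost; you must sum over $a=|S|\leq n/2$ with an $a$-dependent cost. Moreover, replacing the target $2/n!$ by the product $\frac{2}{a!}\cdot\frac{2}{(n-a)!}$ of the equidistribution probabilities on the two sides produces an extra factor $\binom{n}{a}$ that your count omits. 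So even when both sides are mixed, what must be shown to be $o(1)$ is, after summing over how the $k$ transpositions are split between $S$ and its complement,
\[
\sum_{a=1}^{\lfloor n/2\rfloor}\binom{n}{a}^{2}\Bigl(1-\frac{a(n-a)}{\binom{n}{2}}\Bigr)^{k}\leq\sum_{a=1}^{\lfloor n/2\rfloor}\binom{n}{a}^{2}n^{-2a(1-a/n)}e^{-2a(1-a/n)\omega_n}.
\]
Its $a=1$ term is already about $e^{-2\omega_n}$, so there is no slack, and intermediate $a$ need an estimate such as Lemma~\ref{lem: study of an auxiliary function}.

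Second, and more fundamentally, you cannot apply ``the same asymptotic to each part''. In the sum over $k_1+k_2=k$, many terms have one side below its own $L^\infty$ mixing time, that is, $k_1<a\ln a$ or $k_2<(n-a)\ln(n-a)$. For those terms, the probability that the restricted walk hits the prescribed restriction of $\sigma$ is not $\approx 2/a!$ (resp.\ $2/(n-a)!$) and can be vastly larger. So the disconnected contribution cannot be estimated from mixing asymptotics on the pieces. This is the subtlety the paper stresses: disconnection must be negligible \emph{conditionally on the endpoint}, uniformly in $\sigma$, at a time when its unconditional probability is only a ``large $o(1)$''. What you are missing is an endpoint-uniform bound valid before mixing. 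The paper proceeds as follows.
\begin{enumerate}
\item It uses the uniform bound $p_s(\Id,\alpha)\leq\hap_{m,s}$ (Lemma~\ref{lem_bound_facto}).
\item It proves the pre-mixing estimate $\hap_{m,s}\leq e^{O(m)}e^{-s}$ for $s\leq m\ln m$ (Proposition~\ref{prop_bound_Hnl}(b)).
\item It observes that, since $k\geq n\ln n$ and $x\mapsto x\ln x$ is superadditive, at most one side can be under-mixed (Lemma~\ref{lem: decomposition cS into 3}).
\item It bounds the three resulting sums separately; the factor $e^{-s}$ is what compensates the missing $1/(n-a)!$ when the large side is under-mixed.
\end{enumerate}
Without such a bound your sketch does not give $o(1/n!)$ for the disconnected part, and so does not prove the theorem.
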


Our condition on the growth of $g$ is optimal, see Remark~\ref{rem_tight}. 

One should note that when $\lambda,\mu,n$ are fixed and $g\to\infty$, the formula above was proven earlier~\cite{DYZ,DHR}. In this paper we show that it still holds true if $\nto$ slowly enough (compared to $g$), and we make explicit the fastest rate until which it still holds. These “fixed $n$ asymptotics” results have a very different proof method than ours, but they also apply to more general kinds of Hurwitz numbers~\cite{Yang-Hurwitz,ALR}. It would be interesting to see if our method can also be applied to extend these general results to the case where $\nto$ as well.

Our strategy of proof relies on interpreting double Hurwitz numbers as the number of paths of a given length, say $\ell$, between permutations with given cycle types in the Cayley graph of the symmetric group generated by transpositions,
that is, up to a scaling factor, as $\ell$-step transition probabilities for the simple random walk on this graph. This random walk is known as the random transposition shuffle famously studied by Diaconis and Shahshahani~\cite{DiaconisShahshahani1981}. Very precise results are known about the mixing time of this random walk (and generalizations), and this leads to estimates on \textit{unconnected} double Hurwitz numbers.
Then we need separate arguments to show that the probability of not being connected is very small, in order to convert our estimates on \textit{unconnected} double Hurwitz numbers into estimates on \textit{connected} double Hurwitz numbers.

\paragraph*{Acknowledgments}

BL thanks Guillaume Chapuy for joint works and enlightening discussions about Hurwitz numbers, and the connection with~\cite{DiaconisShahshahani1981}.

LT was supported by the ANR project LOUCCOUM (ANR-24-CE40-7809).
BL was partially supported by ANR CartesEtPlus (ANR-23-CE48-0018) and ANR HighGG
(ANR-24-CE40-2078-01).

\paragraph*{AI use statement} 

AI was not used, except to check for typos in the final version of the manuscript.

\section{Definitions}\label{sec_def}
\subsection{Integer partitions and contents of cells}\label{s: integer partitions}

Let $n\geq 2$. An integer partition of the integer $n$ is a tuple of integers $(\lambda_1, \ldots, \lambda_p)$ such that $\lambda_1\geq \ldots \lambda_p\geq 1$ and $\sum_{i=1}^p \lambda_i = n$. The number $p$ is called the length of $\lambda$ and is denoted by $\ell(\lambda)$. If $\lambda$ is a partition of $n$ with $m_i$ parts of size $i$, the size of the centralizer of a permutation with cycle type $\lambda$ is given by
\begin{equation}\label{eq:centralizer}
    z_{\lambda} = \prod_{i\geq 1} i^{m_i} m_i!.
\end{equation}
We denote the set of all (integer) partitions of $n$ by $\kY_n$, and may write $\lambda \vdash n$ for $\lambda \in \kY_n$.

An integer partition $\lambda = (\lambda_1, \ldots, \lambda_p) \vdash n$ can also be represented by a \textit{Young diagram}: this is a diagram of boxes with $\lambda_i$ boxes on the $i$-th row, as illustrated in Figure~\ref{fig: young diagram example}.

\begin{figure}[!ht]
    \centering
{
\ytableausetup{boxsize=1.6em}
\begin{ytableau}
    \none & \\
    \none && \\
    \none &&&&& \\
    \none &&&&& \\
    \none &&&&&&&&& \\
    \none &&&&&&&&&& \\
\end{ytableau}
}
    \caption{The Young diagram of the integer partition $\lambda = (10,9,5,5, 2, 1)\vdash 32$: it has 10 cells on the first row, 9 on the second, 5 on the third, 5 on the fourth, 2 on the fifth, and 1 on the sixth.}
    \label{fig: young diagram example}
\end{figure}

Let $\lambda = (\lambda_1, \ldots, \lambda_p)$ be a partition. The number of boxes in the $j$-th column of $\lambda$ is denoted by $\lambda_j'$, and the conjugate partition is the partition $\lambda' = (\lambda'_1, \lambda'_2,\ldots, \lambda'_\ell)$. At the level of Young diagrams, $\lambda'$ is the reflection of $\lambda$ with respect to the diagonal.

If a box is in the $i$-th row and $j$-th column of a Young diagram, we say that it has coordinates $(i,j)$ and we might identify it with its coordinates.

The content of a box $(i,j)$ is the integer $j-i$, and the content $C_\lambda$ of a Young diagram $\lambda$ is the sum of the contents of its cells, that is,
\begin{equation}
    C_\lambda = \sum_{(i,j)\in \lambda} j-i,
\end{equation}
as illustrated in Figure~\ref{fig: content of a diagram}.

\begin{figure}[!ht]
    \centering
{
\ytableausetup{boxsize=1.6em}
\begin{ytableau}
    \none &-5 \\
    \none &-4&-3 \\
    \none &-3&-2&-1&0&1 \\
    \none &-2&-1&0&1&2 \\
    \none &-1&0&1&2&3&4&5&6&7 \\
    \none &0&1&2&3&4&5&6&7&8&9 \\
\end{ytableau}
}
    \caption{The Young diagram of the integer partition $\lambda = (10,9,5,5,2,1)\vdash 32$ filled with its contents. The content of $\lambda$ is $C_\lambda = 45+28+3+1 - (15+6+1) = 55$.}
    \label{fig: content of a diagram}
\end{figure}

\subsection{Characters of symmetric groups and partitions}

Let $\kS_n$ be the symmetric group of index $n$. Its set of irreducible representations $\widehat{\kS_n}$ is in bijection with the set $\kY_n$ of integer partitions, and in what follows we identify representations with integer partitions: we write $\lambda\vdash n$ or $\lambda \in \kY_n$ or $\lambda \in \widehat{\kS_n}$ and consider that this is the same object.

For $\lambda\in \widehat{\kS_n}$ and $\sigma \in \kS_n$, we denote the dimension of the irreducible representation $\lambda$ by $d_\lambda$, the character of $\lambda$ evaluated at $\sigma$ by $\ch^\lambda(\sigma)$, and the associated renomalized character by $\chi^\lambda(\sigma) = \ch^\lambda(\sigma)/d_\lambda$.

In general, characters can be expressed using the complicated Murnaghan--Nakayama rule. However, for transpositions there are very simple formulas involving only the lengths of the rows and columns, or the contents: we have (denoting $\tau = (1 \; 2)$), for any $n\geq 2$ and $\lambda\vdash 2$,
\begin{equation}
    \chi^\lambda(\tau) = \frac{1}{\binom{n}{2}}\sum_{i\geq 1} \binom{\lambda_i}{2}-\binom{\lambda_i'}{2} = \frac{C_\lambda}{\binom{n}{2}}.
\end{equation}
There, the first equality is an old formula attributed to Frobenius, which can be found in \cite[Fact 3.D.2]{LivreDiaconis1988}, and the second, also classical, is obtained by rewriting the compensations in the sums differently.

\subsection{Hurwitz numbers}\label{s: def Hurwitz}

Given an integer $g\geq 0$ and two partitions $\l$ and $\mu$ of the same integer $n\geq 1$, and setting $k=2g-2+\ell(\mu)+\ell(\l)$, the (connected) double \emph{Hurwitz number} $h_g(\mu,\lambda)$ is the number of $(k+2)$-tuples of permutations $(\tau_1,\ldots,\tau_k,\sigma,\phi)$ of the symmetric group $\Sn$ such that each $\tau_i$ is a transposition, $\sigma$ has cycle type $\l$ and $\phi$ has cycle type $\mu$, and 
\begin{equation}\label{eq_def_hurwitz}
    \tau_1\cdot\tau_2\cdot\ldots\cdot \tau_k=\sigma\phi,
\end{equation}
with the additional condition that the action of the subgroup of $\Sn$ generated by the $(\tau_i)$ on $\ag 1,2,\ldots,n \ad$ is transitive. Equivalently, this means that the graph with vertex set $\ag 1,2,\ldots,n \ad$ and edge set given by the $(\tau_i)$ must be connected.
\begin{remark}
    Beware that definitions of Hurwitz numbers vary slightly depending on the articles. In particular, oftentimes they are defined as the number above, divided by $k!$. The terms in~\eqref{eq_def_hurwitz} may also be shuffled, but they give equivalent definitions by conjugation.
\end{remark}

Historically, Hurwitz numbers were introduced with a geometric definition~\cite{hurwitz_ueber_1891}: they count (equivalence classes of) branched covers of the sphere by a genus $g$ surface. In short, a branched cover (illustrated in Figure~\ref{fig:hurwitz}) is a holomorphic map $f$ between Riemann surfaces such that except for a finite number of special points, around any point $x$ of the genus $g$ surface “above” there is a local chart such that $f(x)=x$. The number $n$ in the definition above corresponds to the number of preimages of a typical point on the sphere. Non-typical points are called branch points and in the permutation model they are represented by the $\tau_i$, $\sigma$ and $\phi$ (more precisely, the cycle structure of the permutations encode the behaviour of their preimages).

\begin{figure}
    \centering
    \includegraphics[width=0.3\linewidth]{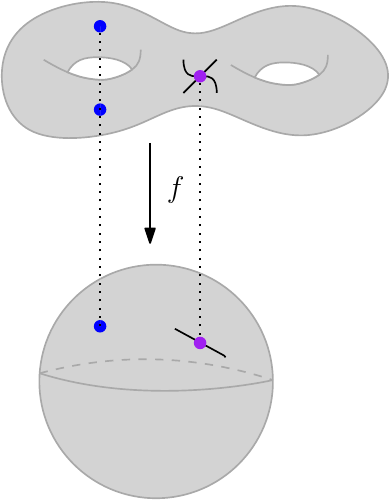}
    \caption{A branched cover, with $g=2$ and $n=2$. In blue, a typical point and its preimages. In purple, a branch point and its preimage.}
    \label{fig:hurwitz}
\end{figure}

\subsection{Random walks on symmetric groups}
Let $\cT_n$ be the conjugacy class of transpositions of $\kS_n$, and let $(X_t)_{t\geq 0}$ be the (discrete time) simple random walk on the (bipartite) Cayley graph $\Cay(\kS_n, \cT_n)$. In other words, this is the Markov chain on $\kS_n$ with transition probabilities $P(x,y) = \frac{1_{xy^{-1} \in \cT_n}}{\binom{n}{2}}$ for $x,y \in \kS_n$.
Given $x,y\in \kS_n$ and an integer $t\geq 0$, denote 
\begin{equation}
    p_t(x,y) = \bbP_x(X_t = y) = \bbP(X_t = y \mid X_0 = x).
\end{equation}
Note that $\binom{n}{2}^t p_t(\Id, \Id)$ is the number of paths of length $t$ from $\Id$ to $\Id$ in $\Cay(\kS_n, \cT_n)$, where $\Id$ is the identity element of $\kS_n$. 

Diaconis and Shahshahani \cite{DiaconisShahshahani1981} understood that this random walk\footnote{The original walk actually has some laziness, but this does not affect the analysis.}, also called the random transposition shuffle, could be studied using representation theory, and used this link to find precise mixing time estimates.

\section{Strategy of proof}
Let us first clarify how to interpret the right-hand-side of Theorem~\ref{thm_main}. There, 
\begin{itemize}
    \item $\frac{n!}{z_\l}$ is exactly the number of permutations of cycle type $\l$, and for any $\ell\geq 0$;
    \item $\binom{n}{2}^\ell$ counts the number of paths of length $\ell$ in $\Cay(\kS_n, \cT_n)$ starting from a given $\sigma\in\Sn$.
\end{itemize}
Therefore, by definition of Hurwitz numbers, we would like to show that for any $\sigma,\phi\in \Sn$ and $\ell$ large enough such that $\sgn(\sigma)\sgn(\phi)=(-1)^\ell$, the probability that the random walk started at $\phi$ ends at $\sigma$ after $\ell$ steps and that the associated graph is connected should be $\frac{2}{n!}$ (asymptotically). 

If we forget about the connectedness condition for a moment, this means that the endpoint of the walk should be asymptotically equidistributed among permutations of $\Sn$ with the right sign. Since the random walk is on a Cayley graph, we have $p_t(\phi,\sigma)=p_t(\Id,\phi^{-1}\sigma)$ for any $t$ and therefore we can focus on walks started at $\Id$. We will show the following result.

\begin{proposition}\label{prop_mixing}
    Let $(\omega_n)$ such that $\omega_n\to \infty$ as $n\to \infty$. As $n\to \infty$, uniformly over all $\sigma \in \kS_n$, and all $\ell \geq (n\ln n) + n\omega_n$, we have 
\begin{equation}
    \max_{\sigma\atop \sgn(\sigma)=(-1)^\ell}\left\lvert\frac{n!}{2}p_{\ell}(\Id,\sigma)-1\right\rvert\xrightarrow[n\to\infty]{} 0.
\end{equation}
\end{proposition}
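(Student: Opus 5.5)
We use Fourier analysis on $\kS_n$, following Diaconis and Shahshahani. Since the step distribution is the uniform measure on the conjugacy class $\cT_n$, it is a class function. The $\ell$-step transition probability therefore expands over irreducible characters as
\begin{equation*}
  p_\ell(\Id,\sigma) = \frac{1}{n!}\sum_{\lambda\vdash n} d_\lambda\, \ch^\lambda(\sigma)\, \chi^\lambda(\tau)^\ell ,
\end{equation*}
with $\chi^\lambda(\tau)=C_\lambda/\binom n2$.

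The trivial representation $(n)$ has $\chi=1$ and contributes $1/n!$. The sign representation $(1^n)$ has $\chi=-1$ and contributes $\sgn(\sigma)(-1)^\ell/n!$, which also equals $1/n!$ when $\sgn(\sigma)=(-1)^\ell$. Hence
\begin{equation*}
  \frac{n!}{2}p_\ell(\Id,\sigma)-1=\frac12\sum_{\lambda\neq (n),(1^n)} d_\lambda \ch^\lambda(\sigma)\chi^\lambda(\tau)^\ell .
\end{equation*}
Using $|\ch^\lambda(\sigma)|\le d_\lambda$, the error is bounded uniformly in $\sigma$ by
\begin{equation*}
  \frac12\sum_{\lambda\neq (n),(1^n)} d_\lambda^2\, |\chi^\lambda(\tau)|^\ell .
\end{equation*}
This is an $\ell^\infty$ (rather than total variation) bound, which is why we lose the square root relative to the usual upper bound lemma. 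The target is to show this sum tends to $0$ for $\ell\ge n\ln n+n\omega_n$. The sum is monotone decreasing in $\ell$ because $|\chi^\lambda(\tau)|\le 1$, so it suffices to treat $\ell=\lceil n\ln n+n\omega_n\rceil$; that gives uniformity in $\ell$.

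Next we reduce to partitions with $\lambda_1\ge\lambda_1'$. Conjugation $\lambda\mapsto\lambda'$ preserves $d_\lambda$ and flips the sign of $C_\lambda$, so those terms account for at least half the sum. We then split according to $\lambda_1=n-m$.
\begin{itemize}
  \item \textbf{Rows $\lambda_1=n-m$ with $m\le n/2$.} Following \cite{DiaconisShahshahani1981}, we use the estimate $\chi^\lambda(\tau)\le 1-\frac{2m(n-m+1)}{n(n-1)}$. This follows from the Frobenius formula, since the row term $\binom{\lambda_1}{2}$ dominates and the remaining rows form a partition of $m$ contributing at most $\binom m2$. Combined with the lower bound $\chi^\lambda(\tau)\ge -\chi^{\lambda'}$ handled by symmetry, and with $\sum_{\lambda:\lambda_1=n-m} d_\lambda^2\le \binom nm^2 m!$, the terms are bounded by
  \begin{equation*}
    \binom nm^2 m!\,\exp\!\left(-\frac{2m(n-m+1)}{n(n-1)}\,\ell\right).
  \end{equation*}
  For $m$ up to a small constant times $n$, the exponent is about $-2m\ln n-2m\omega_n$. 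Since $\binom nm^2 m!\le n^{2m}/m!$, we get a bound of roughly $e^{-2m\omega_n}/m!$, whose sum over $m\ge1$ tends to $0$.
  \item \textbf{Rows with $\lambda_1\le n/2$ or so.} Here $|\chi^\lambda(\tau)|\le$ roughly $\max(\lambda_1,\lambda'_1)/n\le c<1$, using $|C_\lambda|\le \binom n2\max(\lambda_1,\lambda_1')/n$ up to lower order. The contribution of these partitions is then bounded crudely by $\sum_\lambda d_\lambda^2\, c^\ell = n!\,c^{\ell}$, which is $\le \exp(n\ln n - c' n\ln n)$. This does not obviously vanish unless $c$ is small, so it must be merged with the intermediate range below.
\end{itemize}

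The main obstacle is this intermediate and large-$m$ regime, $m\in[\epsilon n, n/2]$ and beyond, where the cheap bounds are only borderline. We handle it as Diaconis--Shahshahani do. With $\ell\approx n\ln n$, the factor $(1-m/n)^{\ell}$-type decay, namely $\exp(-2m(1-m/n)\ln n)$, must beat $\binom nm^2 m!\le n^{m}\binom nm$. The exponent yields $n^{-2m(1-m/n)}$, which beats $n^m$ only when $2(1-m/n)>1$, i.e.\ $m<n/2$, and then with room to spare that absorbs $\binom nm\le 2^n$ provided $m(1-2m/n)\ln n\gg n$. The region $m$ close to $n/2$ needs the sharper character bound that accounts for the second row too. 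We therefore redo the estimate using $\chi^\lambda(\tau)\le \frac{\lambda_1-1}{n-1}$ for $\lambda_1\le n/2$, together with the full count $\sum_\lambda d_\lambda^2=n!$. This gives $n!\,2^{-\ell}\to0$ since $\ell\ge n\ln n\gg \log_2 n!/1$ only marginally; $\ell\ln2\approx 0.69\,n\ln n<\ln n!$, so this fails as stated. I would first try instead the bound $|\chi|\le \lambda_1/n$ with $\ell\ln(n/\lambda_1)$. For $\lambda_1\le n/\ln n$ this gives decay faster than $n!$. For $n/\ln n\le \lambda_1\le n/2$, I would count partitions by $\lambda_1$ using $\sum_{\lambda_1=n-m} d_\lambda^2\le\binom nm^2m!$, splitting into dyadic ranges of $\lambda_1$ and checking each range separately. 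If that bookkeeping is too delicate, I would cite the published sharp $\ell^\infty$ (or $\ell^2$) cutoff bound for random transpositions, which holds at time $\tfrac12 n\ln n$ for total variation and hence at $n\ln n$ for the $\ell^2$/$\ell^\infty$ distance, as stated.
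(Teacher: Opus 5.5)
Your reduction (Frobenius expansion, cancellation of the trivial and sign terms when $\sgn(\sigma)=(-1)^\ell$, the bound $|\ch^\lambda(\sigma)|\le d_\lambda$, monotonicity in $\ell$) is the paper's proof. The paper does not reprove $\sum_{\lambda\neq[n],[1^n]}d_\lambda^2|\chi^\lambda(\tau)|^\ell\to0$ either; it cites it for the non-lazy walk (Corollary~\ref{cor: cv profiles in ell two rewritten}), so your fallback is its route. The thing to cite is this $L^2$-type sum itself (Diaconis--Shahshahani's walk is lazy, so their bound needs a small adjustment), not a total-variation bound at $\frac12 n\ln n$, which does not by itself imply it. Incidentally, your direct attempt closes with the tools you already listed once you keep the $n\omega_n$ part of $\ell$: the first-row bound gives at most $2^n e^{-\epsilon n\omega_n}$ for each $m\in[\epsilon n,n/2]$, and for $\lambda_1=xn\le n/2$ with $m=n-\lambda_1$ one gets $\binom{n}{m}^2 m!\,(\lambda_1/n)^{\ell}\le 2^n n^{n(1-x+\ln x)}$ with $1-x+\ln x\le\frac12-\ln 2<0$, so no second-row refinement or dyadic splitting is needed.
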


Up to minor details, this amounts to bounding the $L^\infty$ mixing of the random transposition walk around $t=n\ln n$, which is equivalent to bounding the $L^2$ mixing time around time $t/2$ (for instance by \cite[Proposition 4.15]{LivreLevinPeres2019MarkovChainsAndMixingTimesSecondEdition}). Such bounds were first proved in the seminal paper of Diaconis and Shahshahani~\cite{DiaconisShahshahani1981}. Proposition~\ref{prop_mixing} will be proved in Section~\ref{s: proof of proposition prop mixing}.

The second ingredient of the proof is to show that the associated graph will be connected. It is now known~\cite{JainSawhney2024transpositionprofileotherproof} that w.h.p., as the mixing time is reached, the graph becomes connected (or, more precisely, there is no more isolated points, which in many random graph processes, is equivalent to being connected w.h.p.). However, there is a subtlety here: we want this to happen \textbf{conditionally on the endpoint of the walk}. In other words, we want to show the following proposition.

\begin{proposition}\label{prop_connected}
    Let $(\omega_n)$ such that $\omega_n\to \infty$ as $n\to \infty$. As $n\to \infty$, uniformly over all $\sigma \in \kS_n$, and all $\ell\geq (n\ln n) + n\omega_n$ we have
\[\bbP(X_\ell = \sigma \text{ and the graph is \textbf{not} connected }| X_0 = \Id)=o\bigpar{1/n!}.\]
\end{proposition}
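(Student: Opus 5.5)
The plan is a union bound over the possible ``cuts'' of the graph, followed by a Fourier bound on the walk that never crosses the cut. If the graph generated by $\tau_1,\dots,\tau_\ell$ is not connected, there is a set $S\subset\{1,\dots,n\}$ with $1\le |S|=s\le n/2$ that no $\tau_i$ crosses. Every $\tau_i$ then lies in $\cT(S)\cup\cT(S^c)$, the transpositions supported inside $S$ or inside $S^c$. Hence
\[
\bbP(X_\ell=\sigma,\ \text{not connected})\le \sum_{s=1}^{\lfloor n/2\rfloor}\ \sum_{|S|=s}\bbP\big(X_\ell=\sigma,\ \text{all steps in }\cT(S)\cup\cT(S^c)\big).
\]
Conditionally on all steps lying in $\cT(S)\cup\cT(S^c)$, the walk is the simple random walk $Y$ on $G_S=\kS_S\times\kS_{S^c}$ whose steps are uniform on the within-block transpositions. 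Writing $N=\binom n2$, the summand therefore factors exactly as
\[
\Big(1-\tfrac{s(n-s)}{N}\Big)^{\ell}\;\bbP_{\Id}(Y_\ell=\sigma).
\]
It vanishes unless $\sigma$ stabilises $S$.

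The key intermediate bound, uniform in $s\ge1$, $x\in G_S$ and $\ell\ge n\ln n+n\omega_n$, is
\[
\bbP_{\Id}(Y_\ell=x)\le \frac{C}{s!\,(n-s)!}.
\]
I would prove it by Fourier analysis on $G_S$, as in the proof of Proposition~\ref{prop_mixing}. The irreducible representations are pairs $(\alpha,\beta)$ with $\alpha\vdash s$ and $\beta\vdash n-s$. The step distribution is a mixture of the two single-block transposition measures with weights $\binom s2/M$ and $\binom{n-s}2/M$, where $M=\binom s2+\binom{n-s}2$. The eigenvalue on $(\alpha,\beta)$ is therefore $\big(C_\alpha+C_\beta\big)/M$, by the content formula. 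This gives
\[
\bbP(Y_\ell=x)\le \frac{1}{s!(n-s)!}\sum_{\alpha,\beta} d_\alpha^2d_\beta^2\,\Big|\frac{C_\alpha+C_\beta}{M}\Big|^{\ell}.
\]
The terms with eigenvalue $\pm1$ (trivial and sign-type pairs) contribute $O(1)$. For the remaining terms I would reuse the Diaconis--Shahshahani estimates. The eigenvalues lie at distance $\gtrsim$ (number of cells off the first row or column)$/n$ from $\pm1$. Since $\ell\ge n\ln n+n\omega_n$ and $M\le N$, the sum of the rest is $o(1)$, essentially as in the proof of Proposition~\ref{prop_mixing}. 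The one point needing care is tiny blocks (e.g.\ $s=1,2$), where the block $\kS_S$ is trivial or of order two; there the bound is immediate.

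Plugging in, and using $1-x\le e^{-x}$ together with $\binom ns\cdot\frac{1}{s!(n-s)!}=\binom ns^2/n!$, I get
\[
n!\,\bbP(X_\ell=\sigma,\ \text{not connected})\le C\sum_{s=1}^{n/2}\binom ns^{2}\exp\Big(-\frac{2s(n-s)}{n(n-1)}\,\ell\Big)\le C\sum_{s=1}^{n/2}\binom ns^{2} n^{-2s(n-s)/n}e^{-s\omega_n}.
\]
It remains to show this is $o(1)$, by splitting into ranges of $s$:
\begin{itemize}
\item For bounded $s$, $\binom ns^2\le n^{2s}/s!^2$ and $n^{-2s(n-s)/n}=n^{-2s}n^{2s^2/n}$, so the term is $O(e^{-s\omega_n})$.
\item For $s\le \sqrt n$ the same computation works, since $n^{2s^2/n}\le e^{2\ln n}$ is killed by $s!^{-2}$.
\item For $\sqrt n\le s\le n/2$, use $\binom ns^2\le (en/s)^{2s}\le (e\sqrt n)^{2s}$ against $n^{-s}$ (as $(n-s)/n\ge 1/2$) and $e^{-s\omega_n}$. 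This gives terms at most $(e^{2}e^{-\omega_n})^{s}$.
\end{itemize}
All three contributions are summable and tend to $0$ uniformly in $\sigma$ and in $\ell$.

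I expect the main obstacle to be the uniform Fourier bound on the product walk $Y$. It has to hold for every $s$ simultaneously, including very unbalanced blocks, and this requires checking that the Diaconis--Shahshahani tail estimates survive the renormalisation by $M$ instead of $N$. Because $M\le N$ this only helps, but the eigenvalues close to $-1$ (near-sign representations of either block) must be handled using the parity constraint, which is where the constant $C$ (a factor $2$ or $4$) appears.
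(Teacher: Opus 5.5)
The reduction is correct: the union bound over cuts and the identity $\bbP(X_\ell=\sigma,\ \text{all steps in } \cT(S)\cup\cT(S^c))=(M/N)^\ell\,\bbP_{\Id}(Y_\ell=\sigma)$ both hold. Expanding according to the number $\ell_1$ of steps of $Y$ inside $\kS_S$, and applying Lemma~\ref{lem_bound_facto} in each block, gives back exactly the paper's bound \eqref{eq: bound qnl with hap prop}.

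The genuine gap is your key intermediate bound $\bbP_{\Id}(Y_\ell=x)\le C/(s!\,(n-s)!)$, which is false. The block $\kS_S$ receives on average only $\ell\binom s2/M$ steps of $Y$, which is of order $\ell s^2/n^2$. For $\ell$ of order $n\ln n$ and $s\to\infty$ with $s=o(n)$, this is far fewer than the roughly $s\ln s$ steps needed to mix on $\kS_S$.

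For instance, take $\omega_n\le \ln n$, $\ell=\lceil n\ln n+n\omega_n\rceil$ and $s=\lfloor n^{1/3}\rfloor$. Then $\ell\binom s2/M=O(n^{-1/3}\ln n)$, so with probability $1-o(1)$ the $\kS_S$-coordinate of $Y$ never moves. By pigeonhole, some $x\in\{\Id\}\times\kS_{S^c}$ satisfies $\bbP_{\Id}(Y_\ell=x)\ge (1-o(1))/(n-s)!$. This exceeds your bound by a factor $(1-o(1))\,s!/C\to\infty$.

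On the Fourier side, what fails is your claim that the remaining eigenvalues are at distance $\gtrsim r/n$ from $\pm1$. For every $\alpha\vdash s$, the pair $(\alpha,[n-s])$ has eigenvalue $1-\bigl(\binom s2-C_\alpha\bigr)/M\ge 1-2\binom s2/M=1-O(s^2/n^2)$. Symmetrically, $(\alpha,[1^{n-s}])$ has eigenvalue at most $-1+O(s^2/n^2)$. These pairs therefore contribute about $2\,s!$ to your character sum instead of $o(1)$, and no parity argument removes them, since they reflect genuine non-mixing. Note also that dividing by $M<N$ increases $|C_\alpha+C_\beta|/M$, so for $Y$ alone the renormalisation hurts rather than helps.

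What is missing is a treatment of the regimes where one of the two blocks is undermixed; this is exactly the paper's case analysis (Lemma~\ref{lem: decomposition cS into 3}). You should keep $(M/N)^\ell$ and $\bbP_{\Id}(Y_\ell=\sigma)$ together and expand in $\ell_1$. Then bound the block factors by $\hap_{s,\ell_1}$ and $\hap_{n-s,\ell-\ell_1}$, and distinguish whether $\ell_1\ge s\ln s$ and whether $\ell-\ell_1\ge (n-s)\ln(n-s)$. Your final summation with $\binom ns^2$ is the paper's first case (both blocks mixed).

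When only the large block is mixed, the small-block factor can only be bounded through Proposition~\ref{prop_bound_Hnl}~(b), or even trivially by $1$. Then $\binom ns^2$ is replaced by $\binom ns\frac{n!}{(n-s)!}\le (en^2/s)^s$. This loss of up to a factor $s!$ is still absorbed by $\bigl(1-\frac{2s(n-s)}{n(n-1)}\bigr)^\ell\le n^{-2s+2s^2/n}e^{-s\omega_n}$, because $n^{2s/n}/s$ stays bounded for $1\le s\le n/2$. This is the paper's second case, handled via Lemma~\ref{lem: study of an auxiliary function}.

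When the large block receives atypically few steps, you need the decay $\hap_{n-s,\ell-\ell_1}\le e^{O(n)}e^{-(\ell-\ell_1)}$ from Proposition~\ref{prop_bound_Hnl}~(b). Combined with $(s/n)^2+(1-s/n)^2/e\le 1/e$, it gives a contribution $e^{O(n)}n^ne^{-\ell}=e^{O(n)-n\omega_n}$; this is the paper's third case. Without these two estimates, your argument does not establish the proposition.
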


Note that shortly after mixing (in $L^\infty$, i.e.\ shortly after time $n\ln n$), the probability of not being connected is a “large $o(1)$”, while the probability of hitting a given point is $2/n!$, therefore this proposition does not follow from previous existing results. In some sense, we want to show that the probability of not being connected is “uniformly spread among all possible endpoints”.

This result will be shown in Section~\ref{sec_connected}, using combinatorial arguments, and bounds of the number of walks with fixed endpoints and of length shorter than the mixing time, in the fashion of~\cite{ChapuyLoufWalsh2024PlancherelHurwitz}, but much coarser.

Together, Propositions~\ref{prop_mixing} and~\ref{prop_connected} directly imply Theorem~\ref{thm_main}.
\begin{remark}\label{rem_tight}
Propositions~\ref{prop_mixing} and~\ref{prop_connected} are tight in the sense that they are no longer true for smaller $\ell$, thus the condition $\frac{2g}{n}-\ln(n)\to\infty$ in Theorem~\ref{thm_main} is tight. 

But this is due to the fact that we consider \textbf{all} permutations of $\Sn$, restricting to certain classes would improve the result. An extremal example is given by long cycles: when restricted $\l =(n)$, Theorem~\ref{thm_main}
holds as soon as  $\frac{4g}{n} - \ln n \to \infty$, that is for genus twice lower than the general case.
\end{remark}

\section{Bounds on character sums}

\subsection{Hurwitz numbers and character sums}

Hurwitz numbers/transition probabilities for the transposition random walk can be written in terms of characters of the symmetric group. Indeed, there is a standard formula dating back to Frobenius: given $x,y\in \kS_n$ and an integer $t\geq 0$, we have
\begin{equation}\label{eq: Frobenius character formula transpositions at time t}
    p_t(x,y) = \frac{1}{n!}\sum_{\lambda\vdash n} \chi^\lambda(\tau)^t\ch^\lambda(x) \ch^\lambda(y) = \frac{1}{n!}\sum_{\lambda\vdash n} d_\lambda^2 \chi^\lambda(\tau)^t \chi^\lambda(x) \chi^\lambda(y).
\end{equation}

Let us now introduce a quantity that roughly approximates the formula above, while being slightly simpler and not containing negative terms, which will simplify the proofs.

\begin{equation}\label{eq: definition xi Xi Hurwitz numbers}
    \Hap_{n,\ell} = \frac{1}{n!}\sum_{\lambda\vdash n}d_\lambda^2|C_\lambda|^\ell \quad \quad \text{ and } \quad \quad \hap_{n,\ell} = \frac{1}{n!}\sum_{\lambda\vdash n}d_\lambda^2|\chi^\lambda(\tau)|^\ell.
\end{equation}
Note that we still have the relation $\Hap_{n,\ell} = \binom{n}{2}^\ell\hap_{n,\ell}$ for any $\ell$, and that for even $\ell$ we have $\hap_{n,\ell} = p_\ell(\Id, \Id)$.

\begin{lemma}\label{lem_bound_facto}
    Let $\sigma\in\Sn$ and $\ell \geq 0$. The number of factorizations of $\sigma$ in $\ell$ transpositions is bounded above by $\Hap_{n,\ell}$. Equivalently, we have $p_\ell(\Id, \sigma) \leq \hap_{n,\ell}$.
\end{lemma}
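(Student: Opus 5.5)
We start from the Frobenius formula~\eqref{eq: Frobenius character formula transpositions at time t} with $x=\Id$ and $y=\sigma$:
\[
p_\ell(\Id,\sigma) = \frac{1}{n!}\sum_{\lambda\vdash n} d_\lambda^2\, \chi^\lambda(\tau)^\ell\, \chi^\lambda(\sigma).
\]
We bound this by the triangle inequality. The key input is the elementary fact that a normalized character satisfies $|\chi^\lambda(\sigma)|\le 1$. Indeed, $\ch^\lambda(\sigma)$ is the trace of a unitary (indeed permutation-like, finite-order) matrix of size $d_\lambda$. Its eigenvalues are roots of unity, so $|\ch^\lambda(\sigma)|\le d_\lambda$.

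Applying the triangle inequality term by term then gives
\[
p_\ell(\Id,\sigma) \le \frac{1}{n!}\sum_{\lambda\vdash n} d_\lambda^2\, |\chi^\lambda(\tau)|^\ell = \hap_{n,\ell}.
\]
(If one prefers to avoid absolute values on the left, note that $p_\ell(\Id,\sigma)\ge 0$ anyway.)

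For the equivalent formulation, the number of factorizations of $\sigma$ into $\ell$ transpositions is the number of length-$\ell$ paths from $\Id$ to $\sigma$ in $\Cay(\kS_n,\cT_n)$. This equals $\binom{n}{2}^\ell p_\ell(\Id,\sigma)$, because each step has exactly $\binom n2$ choices and the path is determined by the ordered transpositions. Multiplying the bound by $\binom n2^\ell$ and using $\chi^\lambda(\tau)=C_\lambda/\binom n2$ gives the claimed bound $\binom n2^\ell\hap_{n,\ell}=\Hap_{n,\ell}$.

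There is no real obstacle here. The only point needing care is the relabeling: one must check that the order of the factors and the choice of left versus right multiplication in the walk do not matter. This holds because $p_\ell(\Id,\sigma)$ counts ordered sequences $(\tau_1,\dots,\tau_\ell)$ whose product is $\sigma$ (or $\sigma^{-1}$). Inverses and conjugates of $\sigma$ have the same character values, so the bound is unaffected.
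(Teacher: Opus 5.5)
Your proposal is correct and matches the paper's proof: apply the Frobenius formula at $x=\Id$, use the triangle inequality, and use $|\chi^\lambda(\sigma)|\le 1$. Your finite-order/roots-of-unity justification of that bound is fine, though the representation matrices are not ``permutation-like'' in general. The closing remark on the order of factors is harmless but unnecessary, since the bound $\hap_{n,\ell}$ does not depend on $\sigma$.
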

\begin{proof}
This follows from \eqref{eq: Frobenius character formula transpositions at time t}, the triangle inequality, and the fact that $|\chi^\lambda(\phi)| \leq 1$ for any $\phi \in \kS_n$.
\end{proof}
\begin{lemma}\label{lem: monotonie hap n ell}
    Let $n\geq 1$. Then $\ell \mapsto \hap_{n,\ell}$ is weakly decreasing.
\end{lemma}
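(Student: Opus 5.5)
The plan is to compare the sum defining $\hap_{n,\ell+1}$ term by term with the sum defining $\hap_{n,\ell}$. Recall
\[
\hap_{n,\ell} = \frac{1}{n!}\sum_{\lambda\vdash n} d_\lambda^2\,|\chi^\lambda(\tau)|^\ell .
\]
Every coefficient $d_\lambda^2/n!$ is nonnegative and independent of $\ell$. Therefore it suffices to show that for each $\lambda\vdash n$ the map $\ell\mapsto |\chi^\lambda(\tau)|^\ell$ is weakly decreasing. This holds as soon as $|\chi^\lambda(\tau)|\le 1$.

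The key step is exactly this bound $|\chi^\lambda(\tau)|\le 1$. It is the same fact already invoked in the proof of Lemma~\ref{lem_bound_facto}: $|\chi^\lambda(\phi)|\le 1$ for every $\phi\in\kS_n$. The reason is that $\ch^\lambda(\phi)$ is the trace of a unitary (indeed finite-order) matrix of size $d_\lambda$. Its eigenvalues are roots of unity, so $|\ch^\lambda(\phi)|\le d_\lambda$.

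Alternatively, one can argue directly from the content formula $\chi^\lambda(\tau)=C_\lambda/\binom n2$. The contents satisfy $|C_\lambda|\le \binom n2$, with the extremes attained by the one-row and one-column diagrams.

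Given this bound, for every $\ell\ge 0$ we have
\[
|\chi^\lambda(\tau)|^{\ell+1} = |\chi^\lambda(\tau)|\cdot|\chi^\lambda(\tau)|^{\ell}\le |\chi^\lambda(\tau)|^\ell .
\]
Multiplying by $d_\lambda^2/n!$ and summing over $\lambda\vdash n$ gives $\hap_{n,\ell+1}\le\hap_{n,\ell}$, which is the claim.

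There is no real obstacle here. The only point requiring care is that we work with absolute values, so no sign cancellations can occur. This is precisely why the lemma is stated for $\hap_{n,\ell}$ rather than for $p_\ell(\Id,\Id)$. Indeed, $p_\ell(\Id,\Id)$ vanishes for odd $\ell$ and so is not monotone.
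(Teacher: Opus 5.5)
Your proof is correct and is the same argument as the paper's: the nonnegative weights $d_\lambda^2/n!$ do not depend on $\ell$, and $|\chi^\lambda(\tau)|\le 1$ makes each term $|\chi^\lambda(\tau)|^\ell$ weakly decreasing in $\ell$. Your added justifications of $|\chi^\lambda(\tau)|\le 1$, via eigenvalues being roots of unity or via $|C_\lambda|\le\binom{n}{2}$, are both valid, but the paper takes this bound as known.
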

\begin{proof}
    For every $\lambda\vdash n$ we have $|\chi^\lambda(\tau)| \leq 1$, so the result holds by definition of $\hap_{n,\ell}$.
\end{proof}

\subsection{Bounds on character sums from mixing times}

Precise bounds on character sums -- including those that define $\hap_{n,\ell}$ and $\Hap_{n,\ell}$ in \eqref{eq: definition xi Xi Hurwitz numbers} -- are well-known in the context of mixing times. In particular, the second named author recently proved an easy-to-cite version of these results which we now recall. This result holds for conjugacy classes with order $n$ fixed points, which includes transpositions, but for simplicity of notation we recall only its specialization to transpositions. We also do not need the specific shape of the limiting function.
\begin{proposition}[{\cite[Theorem 4.1, specialized to transpositions]{Teyssier2025ProfilesConjugacyInvariant}}]\label{prop: cv profiles in ell two rewritten}
    There exists a continuous decreasing function $f\du \bbR \to (0, \infty)$ tending to 0 at $+\infty$, such that the following holds.
Let $a\in \bbR$. As $n\to \infty$, recalling that we write $\tau$ for any transposition of $\kS_n$, we have
\begin{equation}
   \sum_{\lambda \in \widehat{\kS_n}\backslash\ag [n], [1^n]\ad} d_\lambda^2 \bg \chi^\lambda(\tau)\bd^{ \lf n(\ln n + a)\rf}\to f(a).
\end{equation}
\end{proposition}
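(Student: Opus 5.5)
The plan is to follow the Diaconis--Shahshahani analysis and identify the limit explicitly. I expect it to be $f(a)=2\bigl(\exp(e^{-2a})-1\bigr)$, which is clearly positive, continuous, decreasing, and tends to $0$ at $+\infty$.

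\textbf{Reduction by conjugation.} Conjugating $\lambda\mapsto\lambda'$ preserves $d_\lambda$ and changes the sign of $C_\lambda$, hence of $\chi^\lambda(\tau)$. So it suffices to treat partitions with $\lambda_1\geq\lambda'_1$ and multiply by $2$. Partitions with $\lambda_1=\lambda'_1$ are counted twice, but they have $\lambda_1\leq\sqrt{2n}$ and fall in the negligible range below. Write $\lambda_1=n-k$ with $k\geq1$, and let $\bar\lambda\vdash k$ be $\lambda$ with its first row removed.

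\textbf{Fixed $k$: the main terms.} For fixed $k$ and fixed $\bar\lambda$, the hook length formula gives
\[
d_\lambda=\frac{n^k}{k!}\,d_{\bar\lambda}\,(1+O(1/n)),
\]
and the content formula gives
\[
\chi^\lambda(\tau)=\frac{\binom{n-k}{2}+C_{\bar\lambda}+O(k n)\ \text{(row-shift terms)}}{\binom n2}=1-\frac{2k}{n}+O(k^2/n^2).
\]
With $t=\lfloor n(\ln n+a)\rfloor$ we get $|\chi^\lambda(\tau)|^t=n^{-2k}e^{-2ak}(1+o(1))$, because the $O(1/n^2)$ correction times $t$ is $O(\ln n/n)\to0$. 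Summing over $\bar\lambda\vdash k$ and using $\sum_{\bar\lambda}d_{\bar\lambda}^2=k!$, the total contribution of level $k$ tends to $e^{-2ak}/k!$. Summing over $k\geq1$ and doubling gives $f(a)$, provided we can exchange limit and sum. That is the content of the next two steps, which I would organise as a dominated-convergence argument with the dominating sequence $C\,e^{-ck}/k!$.

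\textbf{Uniform tail bounds.} This is the main obstacle. I will use two inputs:
\begin{itemize}
\item the Diaconis--Shahshahani bound $d_\lambda\leq\binom nk d_{\bar\lambda}$, which gives $\sum_{\lambda_1=n-k}d_\lambda^2\leq\binom nk^2k!\leq n^{2k}/k!$;
\item the monotonicity estimate $|\chi^\lambda(\tau)|\leq 1-\frac{2k(n-k+1)}{n^2}$, valid when $\lambda_1=n-k\geq n/2$.
\end{itemize}

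In the range $K\leq k\leq n/2$ these give a level-$k$ contribution at most
\[
\frac{n^{2k}}{k!}\exp\!\Bigl(-\frac{2k(n-k+1)(\ln n+a)}{n}\Bigr)=\frac{1}{k!}\exp\!\Bigl(\frac{2k(k-1)\ln n}{n}-2ak(1-k/n)\Bigr).
\]
For $k\leq n/\ln n$ this is $\leq e^{O(k)}/k!$, which is summable and uniform in $n$. For $n/\ln n\leq k\leq n/2$ the $k!$ beats $n^{2k^2/n}$, since $k\ln k\gg 2k^2\ln n/n$ as long as $k\leq \delta n$. Near $k\approx n/2$ the bound must be tightened: I would replace $1/k!$ by the sharper count $\binom nk^2\cdot\max d_{\bar\lambda}^2$ as in Diaconis--Shahshahani's original bookkeeping.

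For $\lambda_1\leq\delta n$, use $|\chi^\lambda(\tau)|\leq\max(\lambda_1,\lambda_1')/(n-1)\leq\delta'$. Together with $\sum d_\lambda^2\leq n!\leq n^n$, the total is at most $n^n\,\delta'^{\,n\ln n}=n^{n(1-\ln(1/\delta'))}\to0$ once $\delta'<1/e$. The intermediate band $\delta n\leq\lambda_1\leq n/2$ will be handled by the same character bound combined with $\sum_{\lambda_1=m}d_\lambda^2\leq\binom nm^2(n-m)!$. Checking that these pieces overlap without a gap is the step I would verify most carefully. If the crude estimates leave a gap, I would fall back on the refined character bounds of Teyssier's general framework.
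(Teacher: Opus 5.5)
The paper gives no proof of this statement. It imports it from Teyssier's Theorem~4.1, which handles more general conjugacy classes and does not need the shape of $f$, and only remarks that one could instead adapt Diaconis--Shahshahani. That adaptation is what you attempt, and it has the bonus of identifying $f(a)=2\bigl(\exp(e^{-2a})-1\bigr)$. Your fixed-$k$ analysis is correct. (One has $C_\lambda=\binom{n-k}{2}+C_{\bar\lambda}-k$ with $C_{\bar\lambda}=O(k^2)$, so the correction is $O(k^2)$, not $O(kn)$, but your final expansion is right.)

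The gap is in the tail, which is the entire difficulty. As you note, the level-$k$ bound only works for $k\le\delta n$ with $\delta<1/2$. Your second band stops at $\lambda_1\le n/2$, so the range $n/2\le\lambda_1\le(1-\delta)n$ is not covered. There your displayed estimate does not even stay bounded. At $k=n/2$ it is, up to polynomial factors, $(2e^{1-a})^{n/2}$. The remedy you propose, keeping $\binom nk^2k!$ instead of $n^{2k}/k!$, still leaves $(8e^{-1-a})^{n/2}$, which diverges for every $a<\ln 8-1$. The real loss is the step $1-x\le e^{-x}$ at $x\approx 1/2$: $(1/2)^{n\ln n}=n^{-n\ln 2}$ beats $\binom{n}{n/2}^2(n/2)!=n^{n/2}e^{O(n)}$, whereas $e^{-n\ln n/2}=n^{-n/2}$ does not.

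The fix uses only your own ingredients: run the intermediate-band estimate on all of $\delta n\le\lambda_1\le(1-\eta)n$.
\begin{itemize}
\item For $\lambda_1\ge\lambda_1'$ one has $|\chi^\lambda(\tau)|\le(\lambda_1-1)/(n-1)$, the negative side following from $\chi^{\lambda'}=-\chi^\lambda$.
\item With $\lambda_1=m=\theta n$, the level contributes at most $\binom nm^2(n-m)!\,\theta^{t}\le e^{O(n)}n^{n(1-\theta+\ln\theta)}$.
\item Since $1-\theta+\ln\theta$ is increasing, it is at most $\eta+\ln(1-\eta)<0$ on this band, so the sum over $m$ is $o(1)$.
\end{itemize}
Then use the level-$k$ bound only for $k\le n/4$:
\begin{itemize}
\item for $k\le n/\ln n$ it is at most $C_a^k/k!$ uniformly in $n$, a summable dominating sequence;
\item for $n/\ln n\le k\le n/4$ its logarithm is at most $O(k)-k\bigl(\tfrac12\ln n-\ln\ln n\bigr)$, so these levels contribute $o(1)$.
\end{itemize}
With $\eta=1/4$ the three ranges overlap, and dominated convergence goes through without falling back on Teyssier.

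Finally, $\lambda_1=\lambda_1'$ does not force $\lambda_1\le\sqrt{2n}$: the hook $(m,1^{m-1})$ with $n=2m-1$ has $\lambda_1=(n+1)/2$. One only has $\sqrt n\le\lambda_1\le(n+1)/2$. This is still inside the negligible band, so the double counting remains harmless.
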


\begin{corollary}\label{cor: cv profiles in ell two rewritten}
    Let $(\omega_n)$ such that $\omega_n\to \infty$ as $n\to \infty$. As $n\to \infty$, uniformly over all $\ell \geq (n\ln n) + n\omega_n$, we have
\begin{equation}
   \sum_{\lambda \in \widehat{\kS_n}\backslash\ag [n], [1^n]\ad} d_\lambda^2 \bg \chi^\lambda(\tau)\bd^{\ell}\to 0. 
\end{equation}
\end{corollary}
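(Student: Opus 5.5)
We will deduce Corollary~\ref{cor: cv profiles in ell two rewritten} from Proposition~\ref{prop: cv profiles in ell two rewritten} using monotonicity in $\ell$ together with the decay of $f$ at $+\infty$. Set
\[
S_n(\ell) = \sum_{\lambda \in \widehat{\kS_n}\backslash\ag [n], [1^n]\ad} d_\lambda^2 \bg \chi^\lambda(\tau)\bd^{\ell}.
\]
Since $|\chi^\lambda(\tau)|\leq 1$ for every $\lambda$, each summand is weakly decreasing in $\ell$. This is the same observation as in Lemma~\ref{lem: monotonie hap n ell}, applied to the truncated sum. Hence $\ell\mapsto S_n(\ell)$ is weakly decreasing, and
\[
\sup_{\ell \geq n\ln n + n\omega_n} S_n(\ell) = S_n\bigl(\lc n\ln n + n\omega_n\rc\bigr).
\]
So it suffices to show that $S_n(\ell_n)\to 0$ for $\ell_n=\lc n\ln n + n\omega_n\rc$.

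Fix $\varepsilon>0$. Because $f$ is decreasing and tends to $0$ at $+\infty$, we can choose $a\in\bbR$ with $f(a)<\varepsilon$. Since $\omega_n\to\infty$, for $n$ large we have $\omega_n\geq a$, and therefore
\[
\ell_n \geq n(\ln n + a) \geq \lf n(\ln n + a)\rf.
\]
Monotonicity then gives $S_n(\ell_n)\leq S_n(\lf n(\ln n+a)\rf)$. By Proposition~\ref{prop: cv profiles in ell two rewritten}, the right-hand side tends to $f(a)<\varepsilon$, so $\limsup_n \sup_{\ell\geq n\ln n+n\omega_n} S_n(\ell)\leq \varepsilon$. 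Letting $\varepsilon\to 0$ concludes the proof.

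There is no real obstacle. The only point needing care is that the proposition is stated for a fixed $a$, while $\omega_n$ varies with $n$. This is exactly what monotonicity handles: it lets us compare with a fixed $a$ and discard the uniformity issue over $\ell$.
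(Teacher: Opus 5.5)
Your proposal is correct and follows the same route as the paper: use the monotonicity of $\ell \mapsto \sum_{\lambda} d_\lambda^2 |\chi^\lambda(\tau)|^{\ell}$ to compare with a fixed $a$, then apply Proposition~\ref{prop: cv profiles in ell two rewritten} together with $f(a)\to 0$ as $a\to+\infty$. You have simply written out the $\varepsilon$-argument that the paper's one-line proof leaves implicit.
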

\begin{proof}
   This follows from Proposition~\ref{prop: cv profiles in ell two rewritten} and the monotonicity of $t\mapsto \sum_{\lambda} d_\lambda^2 \bg \chi^\lambda(\tau)\bd^{t}$.
\end{proof}
We note that a similar estimate could also be obtained by slightly modifying the proofs of \cite{DiaconisShahshahani1981}, or also as a consequence of Propositions 3 and 4 from \cite{Hough2016}. 

We can now prove a bound that will be useful when studying the connectedness probability in Section~\ref{sec_connected}.
\begin{proposition}\label{prop_bound_Hnl}
As $n\to \infty$ we have:
\begin{enumerate}
    \item uniformly for all $\ell \geq n\ln n$,
\begin{equation}
\hap_{n,\ell} \leq \frac{O(1)}{n!};
\end{equation}
\item
uniformly for all $\ell\leq n\ln n$, 
\begin{equation}
    \hap_{n,\ell} \leq e^{-\ell} e^{O(n)}.
\end{equation}
\end{enumerate}
\end{proposition}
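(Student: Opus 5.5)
Part (a) follows from the trivial-plus-sign decomposition together with Corollary~\ref{cor: cv profiles in ell two rewritten}. For part (b) we bound each $|\chi^\lambda(\tau)|^\ell$ directly and use $\sum_\lambda d_\lambda^2=n!$.

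\textbf{Part (a).} Since $\ell\ge n\ln n$, the time $\ell$ is at least $\lf n(\ln n + 0)\rf$. We split the sum defining $\hap_{n,\ell}$ into the contribution of $[n]$ and $[1^n]$, which is exactly $\frac{2}{n!}$ because $d=1$ and $|\chi(\tau)|=1$, and the remaining $\lambda$. By Lemma~\ref{lem: monotonie hap n ell}, applied termwise with $|\chi^\lambda(\tau)|\le 1$, the remaining sum at time $\ell$ is at most its value at time $\lf n\ln n\rf$. Proposition~\ref{prop: cv profiles in ell two rewritten} with $a=0$ shows that this value tends to $f(0)<\infty$, so it is $O(1)$ uniformly. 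Hence $n!\,\hap_{n,\ell}\le 2+O(1)$.

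\textbf{Part (b).} We write $n!\,\hap_{n,\ell}=\sum_\lambda d_\lambda^2|\chi^\lambda(\tau)|^\ell$. The crude bound $|\chi|\le 1$ gives $\hap_{n,\ell}\le 1$, which is not enough. The target $e^{-\ell}e^{O(n)}$ for $\ell\le n\ln n$ suggests interpolating between two regimes:
\begin{itemize}
\item for small $\ell$, we need $\hap_{n,\ell}\le e^{-\ell+O(n)}$ with the factor $e^{O(n)}$ absorbing everything, which is trivial when $\ell\le Cn$;
\item at $\ell\approx n\ln n$, part (a) gives $\hap\le O(1)/n!=e^{-n\ln n+O(n)}$.
\end{itemize}
The natural route is convexity, i.e.\ H\"older. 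The function $\ell\mapsto \ln \sum_\lambda d_\lambda^2|\chi^\lambda(\tau)|^\ell$ is convex in $\ell$, since it is a log of a sum of exponentials. We interpolate between $\ell_0=0$, where $n!\,\hap=n!$, and $\ell_1=\lf n\ln n\rf$, where $n!\,\hap=O(1)$ by part (a). For $\ell=\theta\ell_1$ with $\theta\in[0,1]$ this gives
\[ n!\,\hap_{n,\ell}\le (n!)^{1-\theta}O(1)^{\theta}, \quad\text{so}\quad \hap_{n,\ell}\le (n!)^{-\theta}O(1). \]
Now $\ln n! = n\ln n - n+O(\ln n)$ and $\theta = \ell/\lf n\ln n\rf$, so
\[ \theta\ln n! = \ell\,\frac{n\ln n - n + O(\ln n)}{\lf n\ln n\rf}\ge \ell - \frac{\ell}{\ln n}-O(1)\ge \ell - n - O(1), \]
using $\ell\le n\ln n$. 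Hence $\hap_{n,\ell}\le e^{-\ell}e^{O(n)}$.

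The only delicate point is to check that the H\"older interpolation is legitimate and uniform. This is immediate: the interpolation is H\"older's inequality on the measure $d_\lambda^2$ applied to $|\chi|^{\theta\ell_1}$, and the $O(1)$ from part (a) is uniform. I also need to control the rounding in $\lf n\ln n\rf$ and the error terms of Stirling's formula, all of which are absorbed into $e^{O(n)}$.
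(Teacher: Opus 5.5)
Your proof is correct. Part (a) is essentially the paper's argument: termwise monotonicity plus Proposition~\ref{prop: cv profiles in ell two rewritten}, which you apply at $a=0$, i.e.\ at $\lf n\ln n\rf$.

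Part (b) takes a genuinely different route. The paper bounds each term separately. It uses the classical estimates $d_\lambda\le\binom{n}{r}\sqrt{r!}$ and $\chi^\lambda(\tau)\le 1-r/n$, with $r=n-\lambda_1$, to get $\frac{d_\lambda^2}{n!}|\chi^\lambda(\tau)|^\ell\le e^{O(n)}e^{-\ell}n^{-(n-r)(1-\alpha)}$, where $\alpha=\ell/(n\ln n)\le 1$. It then multiplies by the $e^{O(n)}$ number of partitions. You instead use only the log-convexity of $\ell\mapsto\sum_\lambda d_\lambda^2|\chi^\lambda(\tau)|^\ell$, via H\"older with exponents $1/\theta$ and $1/(1-\theta)$ against the measure $d_\lambda^2$. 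This interpolates between the Plancherel identity $\sum_\lambda d_\lambda^2=n!$ at $\ell=0$ and the $O(1)$ bound at $\ell_1=\lf n\ln n\rf$, after which Stirling finishes the job.

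What each approach buys:
\begin{itemize}
\item Your argument is softer and shorter. It needs no information on individual characters or dimensions and no partition count.
\item It also yields the slightly sharper bound $\hap_{n,\ell}\le O(1)\,e^{-\ell(1-1/\ln n)}$.
\item The price is that (b) now rests on the deep cutoff-profile input at the endpoint $\ell_1$. This is harmless here, since (a) already uses it; in fact any endpoint bound $n!\,\hap_{n,\ell_1}\le e^{O(n)}$ would suffice.
\item The paper's proof of (b), by contrast, is elementary and independent of (a).
\end{itemize}

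One small slip. If you keep the Stirling remainder as a signed $O(\ln n)$, then after multiplying by $\theta\le 1$ it contributes $O(\ln n)$, not $O(1)$. This is harmless, but the simplest fix is to use $n!\ge (n/e)^n$ together with $\lf n\ln n\rf\le n\ln n$, which gives $\theta\ln n!\ge \ell-\ell/\ln n\ge \ell-n$ directly.
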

\begin{proof}
\begin{enumerate}
     \item
By monotonicity (Lemma~\ref{lem: monotonie hap n ell}), it is enough to prove the result at $\ell = \lc n \ln n \rc$, which follows immediately from the definition of $\hap_{n,\ell}$ (in \eqref{eq: definition xi Xi Hurwitz numbers}) and Proposition~\ref{prop: cv profiles in ell two rewritten}.
    \item 
We start by recalling two basic estimates: for any $\lln$, setting $r := n- \lambda_1$, we have
\begin{align}
&d_\l\leq \binom{n}{r}d_{\l_{\geq 2}}\leq \binom{n}{r}\sqrt{r!}\label{ineq_dl},\\
&\chi^\l(\tau)\leq 1-\frac{r}{n}. \label{ineq_content_a}
\end{align}
These two inequalities are quite standard, they can be found for instance in~\cite{LivreDiaconis1988}. More precisely, there \eqref{ineq_content_a} is stated as \cite[Fact 3.D.2]{LivreDiaconis1988}, the first inequality of \eqref{ineq_dl} is stated as \cite[Fact 3.D.1]{LivreDiaconis1988}, and the second inequality of \eqref{ineq_dl}, i.e.\ $d_{\l_{\geq 2}} \leq \sqrt{r!}$, is immediate since $d_{\l_{\geq 2}}^2 \leq \sum_{\mu \vdash r} d_\mu^2 = r!$ (the equality is the dimension equality in the Fourier isomorphism, recalled as \cite[Corollary 2.D.2 (a)]{LivreDiaconis1988})

Now take $\lambda\vdash n$ such that $C_\l>0$ and $\ell \leq n\ln n$. Write $r = n-\lambda_1$ and $\alpha = \frac{\ell}{n\ln n}$. By~\eqref{ineq_dl}, we have
\begin{equation}
    \frac{d_\lambda^2}{n!} \leq \binom{n}{r}^2 \frac{r!}{n!} =  e^{O(n)}n^{-(n-r)}.
\end{equation}
On the other hand, by~\eqref{ineq_content_a}, we have $\chi^\lambda(\tau) \leq 1-\frac{r}{n} \leq e^{-r/n}$, so 
\begin{equation}
   \chi^\lambda(\tau)^\ell\leq e^{-r\ell/n}= n^{-\alpha r}.
\end{equation}
It follows, using that $e^{\ell} = n^{\alpha n}$, that
\begin{equation}
     \frac{d_\lambda^2}{n!} \chi^\lambda(\tau)^\ell \leq e^{O(n)}\pg e^{-\ell} n^{\alpha n}\pd  n^{-(n-r)} n^{-\alpha r} = e^{O(n)}e^{-\ell} n^{-(n-r)(1-\alpha)} \leq e^{O(n)}e^{-\ell}.
\end{equation}
This concludes the proof since there are $e^{O(n)}$ partitions of the integer $n$.
\qedhere
\end{enumerate}
\end{proof}

\subsection{Proof of Proposition~\ref{prop_mixing}}\label{s: proof of proposition prop mixing}

As we now see, we can already prove that if we do not impose any connectivity assumption, the probability that, if $\omega_n \to \infty$, then the end-point of a random path of length $n\ln n + n\omega_n$ in $\Cay(\kS_n, \cT_n)$ started at $\Id$ is a given permutation does essentially not depend on the chosen permutation (given its parity class).

\begin{proof}[Proof of Proposition~\ref{prop_mixing}.]
    Let $n\geq 2$, $t\geq 0$, and $\sigma\in \kS_n$ with sign $(-1)^t$. \\
    Since $\chi^{[1^n]}(\tau)^t\chi^{[1^n]}(\sigma) = (-1)^t(-1)^t = 1$, by \eqref{eq: Frobenius character formula transpositions at time t} we have
    \begin{equation}
        \bg n!p_t(\Id,\sigma) - 2\bd = \bg \sum_{\lambda \in \widehat{\kS_n}\backslash\ag [n], [1^n]\ad} d_\lambda^2 \chi^\lambda(\tau)^t  \chi^\lambda(\Id)\chi^\lambda(\sigma)\bd.
    \end{equation}
By the triangle inequality, and using that $\bg \chi^\lambda(\sigma)\bd \leq 1$ and $\chi^\lambda(\Id)=1$, we deduce that
\begin{equation}
        \bg n!p_t(\Id,\sigma) - 2\bd \leq \sum_{\lambda \in \widehat{\kS_n}\backslash\ag [n], [1^n]\ad} d_\lambda^2 \bg \chi^\lambda(\tau)\bd ^t.
    \end{equation}
The desired result then follows from Corollary~\ref{cor: cv profiles in ell two rewritten}.
\end{proof}

\section{Probability of not being connected}\label{sec_connected}

\subsection{A combinatorial upper bound for unconnected Hurwitz numbers}
Let $n\geq 4$ and $\ell \geq 2$.
We say that a factorization $\tau_1 \cdots \tau_\ell = \sigma$ of a permutation $\sigma\in \kS_n$ into $\ell$ transpositions is \textit{splittable} if there exist subsets $\emptyset \subsetneq S \subsetneq [n]$  and $\emptyset \subseteq I \subseteq [\ell]$
such that for every $i\in I$, $\tau_i$ transposes two elements of $S$ and for every $i\in [\ell] \backslash I$, $\tau_i$ transposes two elements of $[n]\backslash S$. (To avoid double counting when swapping the roles of $S$ and $S^c$, we add the restriction that $S$ is of size at most $n/2$, and if $|S| = n/2$, we impose that it contains the element 1.) We denote the number of splittable factorizations into $\ell\geq 2$ transpositions of a permutation $\sigma\in \kS_n$ by $H^{\textup{spl}}_{n,\ell}(\sigma)$, and set
\begin{equation}\label{eq: def h split}
\begin{split}
     h^{\textup{spl}}_{n,\ell}(\sigma) & = \binom{n}{2}^{-\ell} H^{\textup{spl}}_{n,\ell}(\sigma) \\
     & = \bbP(X_\ell = \sigma \text{ and the graph is \textbf{not} connected after $\ell$ steps}| X_0 = \Id).
\end{split}
\end{equation}
We finally set
\begin{equation}\label{eq: def q n ell}
    q_{n,\ell} = n! \max_{\sigma \in \kS_n} h^{\textup{spl}}_{n,\ell}(\sigma).
\end{equation}
The goal of this section is to show the following proposition, which directly implies Proposition~\ref{prop_connected}.

\begin{proposition}\label{prop: final o(1) asymptotic bound on qnl} Let $(\omega_n)$ such that $\omega_n\to \infty$ as $n\to \infty$. As $n\to \infty$, uniformly over all $\ell \geq (n\ln n) + n\omega_n$, we have

\begin{equation}
    q_{n,\ell}=o(1).
\end{equation}
\end{proposition}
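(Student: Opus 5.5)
We bound $H^{\textup{spl}}_{n,\ell}(\sigma)$ by decomposing each splittable factorization according to its splitting. Fix $S$ with $|S|=s\le n/2$. A splittable factorization with this $S$ requires $\sigma$ to preserve $S$, so $\sigma=\sigma_S\sigma_{S^c}$. It is determined by three choices: the set $I\subseteq[\ell]$ with $|I|=j$; a factorization of $\sigma_S$ into $j$ transpositions of $\kS_S$; and a factorization of $\sigma_{S^c}$ into $\ell-j$ transpositions of $\kS_{S^c}$. By Lemma~\ref{lem_bound_facto}, applied in $\kS_s$ and $\kS_{n-s}$, this gives
\[
H^{\textup{spl}}_{n,\ell}(\sigma)\le \sum_{s=1}^{n/2}\binom{n}{s}\sum_{j=0}^{\ell}\binom{\ell}{j}\binom{s}{2}^{j}\binom{n-s}{2}^{\ell-j}\hap_{s,j}\,\hap_{n-s,\ell-j}.
\]
We use the convention $\hap_{1,0}=1$ and $\hap_{1,j}=0$ for $j>0$. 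Dividing by $\binom n2^\ell$ and setting $p=\binom{s}{2}/\binom{n}{2}$ and $p'=\binom{n-s}{2}/\binom n2$, the inner sum becomes an expectation over the $j$ steps landing in $S$. Note $p+p'=1-s(n-s)/\binom n2\le 1-s/n$, since $n-s\ge n/2$. Hence
\[
q_{n,\ell}\le n!\sum_{s=1}^{n/2}\binom ns (1-s/n)^{\ell}\max_j\bigl(\hap_{s,j}\hap_{n-s,\ell-j}\bigr),
\]
or, more carefully, a weighted sum with binomial weights $\binom{\ell}{j}p^j p'^{\ell-j}/(p+p')^\ell$.

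Next we bound the $\hap$ factors using Proposition~\ref{prop_bound_Hnl} and the trivial bound $\hap\le 1$. For the large block $n-s$ we have $\ell-j\ge \ell-\ell p/(p+p')-$(fluctuations). Typically $j\approx \ell s^2/n^2$, so $\ell-j\ge (n-s)\ln(n-s)$ and part (a) gives $\hap_{n-s,\ell-j}\le O(1)/(n-s)!$. For the small block we use $\hap_{s,j}\le O(1)/s!$ when $j\ge s\ln s$, and $\hap_{s,j}\le e^{-j}e^{O(s)}$ otherwise. The product is then of order $1/(s!(n-s)!)$ up to $e^{O(s)}$, and it cancels the $n!\binom ns^{-1}$ exactly. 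We are left with
\[
q_{n,\ell}\lesssim \sum_{s\ge1} e^{O(s)}\,(1-s/n)^{\ell}\cdot(\text{correction from the small block}).
\]
With $\ell\ge n\ln n+n\omega_n$ we get $(1-s/n)^\ell\le n^{-s}e^{-s\omega_n}$.

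The main obstacle is the small-block term when $j<s\ln s$. Here we have only $e^{-j}e^{O(s)}$ instead of $1/s!$, so the loss is at most a factor $s!\,e^{-j}$. This loss must be absorbed by the gain $n^{-s}e^{-s\omega_n}$, and also by the binomial weights, which make small $j$ cheap when $s$ is small. Since $s!\le n^{s}$ crudely, even the worst case leaves $e^{O(s)}e^{-s\omega_n}$. The constant in $e^{O(s)}$ is uniform, so summing over $s\ge1$ gives $O(e^{-\omega_n})=o(1)$. Analogous care is needed when $\ell-j$ falls below $(n-s)\ln(n-s)$. For $s\le n/2$ this forces $j\gtrsim \ell/2$, which is exponentially unlikely under the binomial weights because $p/(p+p')\le 1/4+o(1)$. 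In that case part (b), $\hap_{n-s,\ell-j}\le e^{-(\ell-j)}e^{O(n)}$, together with the Chernoff bound on $j$, gives a contribution of $e^{-cn\ln n}$ that beats $n!$.

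I would verify the exponents in this bookkeeping most carefully, since the whole argument rests on an exact $n!$ cancellation. The final step is to note that $s=1$ (an isolated point) gives the dominant term, of size $\approx n(1-2/n)^{\ell}\cdot n\approx e^{-2\omega_n}\cdot n^{O(1)}/n^{2}$. This is consistent with the threshold and confirms that no extra $n$ factors break the bound.
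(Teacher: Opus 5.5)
Your decomposition and first inequality are the same as the paper's; keeping $j=0$ with $\hap_{1,0}=1$ is even slightly more careful than the paper's condition $0<\ell_1<\ell$. But the bookkeeping you flagged is wrong, and the argument fails as written. Since $s!\,(n-s)!=n!/\binom{n}{s}$, the product $\hap_{s,j}\hap_{n-s,\ell-j}\approx 1/(s!\,(n-s)!)$ does not cancel the prefactor $n!\binom{n}{s}$. When both blocks are above threshold, the $s$-th term is
\[ n!\binom{n}{s}\sum_{j}\binom{\ell}{j}p^{j}(p')^{\ell-j}\,\frac{O(1)}{s!\,(n-s)!}=O(1)\binom{n}{s}^{2}(p+p')^{\ell}, \]
so a factor $\binom{n}{s}^2\ge (n/s)^{2s}$ survives. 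Your estimate $p+p'\le 1-s/n$, i.e.\ $(p+p')^\ell\le n^{-s}e^{-s\omega_n}$, cannot pay for it. Already for $s=1$ it only gives $n^2(1-1/n)^\ell\le n e^{-\omega_n}$, which need not tend to $0$ (take $\omega_n=\ln\ln n$). Your closing sanity check silently uses the prefactor $n^2$ and the factor $(1-2/n)^\ell$, which contradicts your general display. For the same reason, the spare factor $n^{-s}$ that you spend on the loss $s!$ in the small-block case does not exist.

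The missing ingredient is the sharp contraction $p+p'=1-\frac{2s(n-s)}{n(n-1)}\le\exp\bigl(-2\frac{s}{n}(1-\frac{s}{n})\bigr)$, which gives $(p+p')^{n\ln n}\le n^{-2s}n^{2s^2/n}$. The factor $2$ in the exponent is exactly what is needed against $\binom{n}{s}^2\le(en/s)^{2s}$. The correction $n^{2s^2/n}$ is unbounded once $s\gg\sqrt{n/\ln n}$, so you then need $(e/s)^s n^{s^2/n}=O(1)$ uniformly in $1\le s\le n/2$; this is the paper's Lemma~\ref{lem: study of an auxiliary function}. The remaining $n\omega_n$ steps then contribute $e^{-s\omega_n}$ to the $s$-th term. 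When the small block is below threshold, the paper keeps $e^{-j}$ inside the binomial sum, getting $(p/e+p')^\ell$ and the same lemma with $\alpha=1+1/e$.

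Your treatment of the case where the large block is below threshold also needs repair. For small $s$ the constraint only forces $j\gtrsim s\ln n+n\omega_n$, not $j\gtrsim\ell/2$. Also, $p/(p+p')$ is close to $1/2$, not $1/4$, when $s\approx n/2$. The clean route is again to keep $e^{-(\ell-j)}$ inside the sum. Since $p+p'/e\le 1/e$, the whole contribution is at most $e^{O(n)}\,n!\,e^{-\ell}\le e^{O(n)-n\omega_n}$, with no Chernoff bound needed. Finally, convexity of $x\mapsto x\ln x$ rules out both blocks being below threshold at once.
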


Given $n\geq 4$ and $ \ell \geq 2$, we define $\cS = \cS(n,\ell)$ to be the set of 4-tuples $(n_1, n_2, \ell_1, \ell_2)$ of non-negative integers such that $n_1+n_2 = n, \ell_1+ \ell_2 = n$, $n_1\leq n/2$, and $0<\ell_1<\ell$. 

We start with a combinatorial bound on $q_{n,\ell}$

\begin{proposition}
    Let $n \geq 4$ and $\ell \geq 2$ be integers.
    We have
\begin{equation}\label{eq: bound qnl with hap prop}
    \qnl\leq n!\sum_{(n_1, n_2, \ell_1, \ell_2)\in\cS}\binom{n}{n_1}\binom{\ell}{\ell_1} \frac{\binom{n_1}{2}^{\ell_1}\binom{n_2}{2}^{\ell_2}}{\binom{n}{2}^\ell}\hap_{n_1, \ell_1}\hap_{n_2, \ell_2}.
\end{equation}

\end{proposition}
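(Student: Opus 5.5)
Fix $\sigma\in\kS_n$. The plan is a union bound over the data $(S,I)$ witnessing splittability, followed by factoring the count for each fixed $(S,I)$ into independent factorizations on $S$ and on $S^c$. Each of these factorizations is then bounded by Lemma~\ref{lem_bound_facto}.

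First, any splittable factorization admits at least one witness $(S,I)$ with the normalisation on $S$ from the definition. Hence
\[
H^{\textup{spl}}_{n,\ell}(\sigma)\le \sum_{S}\sum_{I}N(S,I),
\]
where $N(S,I)$ is the number of factorizations $\tau_1\cdots\tau_\ell=\sigma$ with $\tau_i$ supported in $S$ for $i\in I$ and in $S^c$ otherwise. If $N(S,I)\neq0$, then $\sigma$ stabilises $S$, so $\sigma=\sigma_1\sigma_2$ with $\sigma_1\in\kS_S$ and $\sigma_2\in\kS_{S^c}$. Since transpositions with disjoint supports commute, reordering gives $\tau_1\cdots\tau_\ell=\bigl(\prod_{i\in I}\tau_i\bigr)\bigl(\prod_{i\notin I}\tau_i\bigr)$, with both products taken in increasing order of indices. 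The two products lie in $\kS_S$ and $\kS_{S^c}$ respectively. Because $\kS_S\times\kS_{S^c}$ is a direct product, the equation holds iff $\prod_{i\in I}\tau_i=\sigma_1$ and $\prod_{i\notin I}\tau_i=\sigma_2$. Thus $N(S,I)$ is the product of the number of factorizations of $\sigma_1$ into $|I|=\ell_1$ transpositions of $\kS_{n_1}$ and of $\sigma_2$ into $\ell_2=\ell-\ell_1$ transpositions of $\kS_{n_2}$, where $n_1=|S|$. By Lemma~\ref{lem_bound_facto} applied in $\kS_{n_1}$ and $\kS_{n_2}$, this gives
\[
N(S,I)\le\Hap_{n_1,\ell_1}\Hap_{n_2,\ell_2}=\binom{n_1}{2}^{\ell_1}\binom{n_2}{2}^{\ell_2}\hap_{n_1,\ell_1}\hap_{n_2,\ell_2}.
\]
When $n_i=1$, the group $\kS_1$ has no transpositions and all the conventions give zero, which is consistent.

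Next comes the summation. The number of choices of $S$ of size $n_1\le n/2$ is at most $\binom{n}{n_1}$, and for each $\ell_1$ there are $\binom{\ell}{\ell_1}$ choices of $I$ with $|I|=\ell_1$. Multiplying by $n!\binom{n}{2}^{-\ell}$ and taking the maximum over $\sigma$ gives \eqref{eq: bound qnl with hap prop}, which is a sum over $n_1+n_2=n$ and $\ell_1+\ell_2=\ell$. (The definition of $\cS$ presumably intends $\ell_1+\ell_2=\ell$.)

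The only delicate point is the boundary case $\ell_1\in\{0,\ell\}$, which $\cS$ excludes. Suppose $I=[\ell]$, so that all transpositions lie inside $S$ with $|S|\le n/2$; this requires $\sigma$ to fix $S^c$ pointwise. I would handle this either by checking that $\hap_{n_2,0}$-type terms fit the stated range, or by re-choosing the witness: if $I=[\ell]$, all $\tau_i$ avoid $S^c$, so we may enlarge $S$ to $S^c$ minus a point and the witness remains valid. Alternatively, I would note that the paper's intended bound implicitly folds these cases in. I expect this bookkeeping of degenerate witnesses to be the main (minor) obstacle; the factorization step itself is routine.
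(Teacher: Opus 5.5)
Your core argument is the paper's: a union bound over witnesses $(S,I)$, then factoring $N(S,I)$ into independent factorizations of $\sigma_1$ in $\kS_{n_1}$ and $\sigma_2$ in $\kS_{n_2}$, then Lemma~\ref{lem_bound_facto}. Your justification of the factoring step (disjoint transpositions commute, $\kS_S\times\kS_{S^c}$ is a direct product) is more explicit than the paper's. But the boundary issue you flag is a genuine gap, and none of your suggestions closes it.

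\begin{itemize}
\item Terms with $\ell_1\in\{0,\ell\}$ are excluded from $\cS$ by definition, so they cannot ``fit the stated range''.
\item Re-choosing the witness fails. Suppose all the $\tau_i$ lie in one connected component $K\subsetneq[n]$ of the transposition graph, for instance one isolated vertex with the rest connected, which is the typical disconnected configuration. Then every witness $S$ either contains $K$ or avoids it, so $I\in\{\emptyset,[\ell]\}$ whatever you do with $S$.
\item For $n_1=1$ the summand vanishes unless $\ell_1=0$. Your remark that ``all the conventions give zero'' holds only for $\ell_1\ge1$, since $\Hap_{1,0}=1$. So the isolated-vertex factorizations are simply not counted on the right-hand side.
\end{itemize}

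In fact the inequality as stated, with $0<\ell_1<\ell$, is false, so no argument can repair it. Take $n=\ell=4$ and $\sigma=\Id$; here the typo $\ell_1+\ell_2=n$ is harmless.
\begin{itemize}
\item By Riemann--Hurwitz, a transitive factorization of $\Id$ in $\kS_n$ has $2g-2+2n\ge 2n-2=6$ factors. Hence all $\frac{1}{4!}\sum_{\lambda\vdash 4}d_\lambda^2C_\lambda^4=\frac{2\cdot 6^4+2\cdot 9\cdot 2^4}{24}=120$ factorizations of $\Id$ into four transpositions are splittable, and $q_{4,4}\ge 4!\cdot 120/6^4$.
\item On the right-hand side only $n_1=2$ survives. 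Since $\hap_{2,j}=1$, it equals $4!\binom{4}{2}\bigl(\binom41+\binom42+\binom43\bigr)/6^4=4!\cdot 84/6^4$, which is smaller.
\end{itemize}

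The paper's proof has exactly the same omission: it begins with ``pick $(n_1,n_2,\ell_1,\ell_2)\in\cS$''. The correct repair is to make your third option explicit. Define $\cS$ by $1\le n_1\le n/2$, $n_2=n-n_1$, $0\le \ell_1\le \ell$, $\ell_2=\ell-\ell_1$, with $\hap_{m,0}=1$. With that $\cS$ your proof is complete, and nothing downstream breaks:
\begin{itemize}
\item Lemma~\ref{lem: decomposition cS into 3} still holds.
\item In the proof of Proposition~\ref{prop: final o(1) asymptotic bound on qnl}, the sums over $s$ are bounded by full binomial expansions, which already contain $s=0$ and $s=\ell$.
\end{itemize}
Do keep the constraint $n_1\ge1$: a term with $n_1=\ell_1=0$ would equal $n!\,\hap_{n,\ell}\ge 2$ and would destroy the later $o(1)$ bound.
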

\begin{proof}
    Let $\sigma\in \kS_n$ be a permutation that admits a splittable factorization of length $\ell$. We want to bound how many such factorizations exist, that is, to bound $H^{\textup{spl}}_{n,\ell}(\sigma)$. Any such factorization can be obtained by applying the following steps.
\begin{itemize}
    \item First pick $(n_1, n_2, \ell_1, \ell_2) \in \cS$.
    \item Then pick $S$ and $I$ as in the definition, with respective sizes $n_1$ and $\ell_1$, (and having $S$ contain 1 if $n_1= n/2$). There are at most $\binom{n}{n_1}\binom{\ell}{\ell_1}$ such choices.
    \item Define $\alpha_1\in \kS_n$ and $\alpha_2\in \kS_n$ to be the restrictions of $\sigma$ to the sets $S$ and $[n]\backslash S$ respectively. 
    \item Pick transpositions $\tau_i$ transposing elements of $S$ for $i\in I$ and transposing elements of $[n]\backslash S$ for $i\in [\ell]\backslash I$, such that $\prod_{i\in I} \tau_i = \alpha_1$ and $\prod_{i\in [\ell]\backslash I} \tau_i = \alpha_2$, where the products are ordered. Up to relabeling, this counts the number of transposition paths of length $\ell_1$ (resp.\ $\ell_2$) from the identity to $\alpha_1$ (resp.\ $\alpha_2$) in $\kS_{n_1}$ (resp.\ $\kS_{n_2}$), which is upper bounded by $\Hap_{n_1, \ell_1}$ (resp.\ $\Hap_{n_2, \ell_2}$) by Lemma~\ref{lem_bound_facto}. 
\end{itemize}
This shows that 
\begin{equation}
    H^{\textup{spl}}_{n,\ell}(\sigma) \leq \sum_{(n_1, n_2, \ell_1, \ell_2)\in \cS}\binom{n}{n_1}\binom{\ell}{\ell_1}\Hap_{n_1,\ell_1}\Hap_{n_2,\ell_2}.
\end{equation}
We deduce, taking the maximum over $\sigma$, that
\begin{equation}\label{eq: bound qnl with Hap inter}
   \qnl = n! \binom{n}{2}^{-\ell} \max_{\sigma\in \kS_n}H^{\textup{spl}}_{n,\ell}(\sigma)\leq  n! \binom{n}{2}^{-\ell}\sum_{(n_1, n_2, \ell_1, \ell_2)\in \cS}\binom{n}{n_1}\binom{\ell}{\ell_1}\Hap_{n_1,\ell_1}\Hap_{n_2,\ell_2}.
\end{equation}
Rewriting \eqref{eq: bound qnl with Hap inter}, using that $\Hap_{n_i, \ell_i} = \hap_{n_i, \ell_i} \binom{n_i}{2}^{\ell_i}$ for $i\in \ag 1, 2\ad$, we obtain the desired result.
\end{proof}

\subsection{Proof of Proposition~\ref{prop: final o(1) asymptotic bound on qnl}}
We now want to bound the right hand side of \eqref{eq: bound qnl with hap prop}. We first prove the following technical lemma.

\begin{lemma}\label{lem: study of an auxiliary function}
   Let $\alpha \in (0,2)$, and $\beta>1$. There exists a constant $C_{\alpha, \beta}$ such that $(\beta/k)^kn^{\alpha k^2/n}\leq C_{\alpha, \beta}$ for all integers $n\geq 2$ and $1\leq k\leq n/2$.
\end{lemma}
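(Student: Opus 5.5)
Fix $\alpha\in(0,2)$ and $\beta>1$. We take logarithms and bound
\[
\phi(k) := k\ln\beta - k\ln k + \frac{\alpha k^2}{n}\ln n
\]
uniformly over $n\geq 2$ and integers $1\leq k\leq n/2$. The plan is to split the range of $k$ into a ``small $k$'' regime, where $-k\ln k$ dominates, and a ``large $k$'' regime. In the large regime we use $k\leq n/2$, which gives $\frac{k}{n}\leq \frac12$. This yields $\frac{\alpha k^2}{n}\ln n \leq \frac{\alpha}{2} k\ln n$. Since $\alpha/2<1$, this term loses against $k\ln k$ as soon as $k$ is a positive power of $n$.

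\emph{Large $k$.} Write $\gamma:=\alpha/2<1$ and pick $\delta\in(\gamma,1)$, for instance $\delta=(1+\gamma)/2$. For $n^{\delta}\leq k\leq n/2$ we have $\ln k\geq \delta\ln n$. Hence
\[
\phi(k)\leq k\bigl(\ln\beta - (\delta-\gamma)\ln n\bigr),
\]
which is $\leq 0$ once $n\geq \beta^{1/(\delta-\gamma)}$. The finitely many smaller values of $n$ also involve only finitely many $k$, so they are absorbed into the constant.

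\emph{Small $k$.} For $1\leq k\leq n^{\delta}$ we have $\frac{\alpha k^2}{n}\ln n\leq \alpha k\, n^{\delta-1}\ln n$. Since $\delta<1$, the factor $n^{\delta-1}\ln n$ is bounded by a constant $c_\delta$ depending only on $\delta$. Therefore
\[
\phi(k)\leq k\bigl(\ln\beta+\alpha c_\delta\bigr)-k\ln k = k\ln(\beta') - k\ln k,
\]
with $\beta' := \beta e^{\alpha c_\delta}$. The function $x\mapsto x\ln(\beta'/x)$ is bounded above on $(0,\infty)$, with maximum $\beta'/e$ attained at $x=\beta'/e$. This gives $\phi(k)\leq \beta'/e$, and we set $C_{\alpha,\beta}:=\max(e^{\beta'/e},\text{finite exceptional values})$.

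The only delicate point is the choice of the threshold $n^{\delta}$. It must be large enough that in the small regime the quadratic term is at most linear in $k$ with a bounded coefficient, which requires $\delta<1$. It must also be large enough that in the large regime $\ln k$ beats $\frac{\alpha}{2}\ln n$, which requires $\delta>\alpha/2$. The hypothesis $\alpha<2$ is exactly what makes such a $\delta$ exist. The hypothesis $\beta>1$ plays no role beyond ensuring that $\ln\beta$ is finite, and the same argument works for any $\beta>0$.
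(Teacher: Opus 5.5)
Your proof is correct, but it is organized differently from the paper's.

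The paper splits at the \emph{constant} threshold $e\beta$. For $k\le e\beta$ boundedness is immediate. For $x\in[e\beta,n/2]$ it writes $\ln f(x)=x^2g(x)$ with $g(x)=\frac{\ln(\beta/x)}{x}+\alpha\frac{\ln n}{n}$ and computes $g'(x)=\frac{\ln x-\ln(e\beta)}{x^2}\ge 0$. Everything then reduces to the sign of the single endpoint value $g(n/2)=-(2-\alpha)\frac{\ln n}{n}+O(1/n)$.

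You instead split at the \emph{moving} threshold $n^{\delta}$ with $\delta\in(\alpha/2,1)$. You use crude bounds on each side: $k/n\le 1/2$ above the threshold and $k/n\le n^{\delta-1}$ below it. This needs no derivative, only the elementary maximum of $x\ln(\beta'/x)$.

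The paper's monotonicity argument gives finer information. Once $n$ is large, it shows $f\le 1$ for every $k\ge e\beta$, so the constant comes only from a bounded range of $k$. In your version the region where you only have the weaker bound $e^{\beta'/e}$ extends up to $k\approx n^{\delta}$.

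Your argument buys transparency and robustness. It uses only two facts: the quadratic term is at most linear in $k$ with a bounded coefficient when $k\le n^{\delta}$, and it is at most $\frac{\alpha}{2}k\ln n$ in general. It also shows plainly that $\alpha<2$ is exactly what makes an admissible $\delta$ exist. The cost is an auxiliary parameter.

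Your remark that $\beta>1$ is not needed is correct, and the paper's argument adapts in the same way. There is one small wording slip in your last paragraph: for the small-$k$ regime the threshold must be small enough, not large enough, which is what $\delta<1$ expresses.
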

\begin{proof}
Let $n$ be an integer, which we may assume is large enough, and in particular such that $n>e\beta$. We want to show that the function $f$ defined by $f(x) = (\beta/x)^xn^{\alpha x^2/n}$ for $x\in [1, n/2]$ is bounded.
Clearly, $\max_{x\in [1,e\beta]}f(x) = O_{\alpha,\beta}(1)$ as $n \to \infty$.
Now, for $x\in [e\beta,n/2]$, set $g(x) = \frac{\ln(\beta/x)}{x} + \alpha \frac{\ln n}{n}$, so that $\ln f(x) = x^2 g(x)$. 
For $x\in [e\beta,n/2]$, we have $g'(x) = ((\ln x)-\ln(e\beta)/x^2 \geq 0$, so $\max g = g(n/2) = -(2-\alpha)\frac{\ln n}{n} +O_{\alpha, \beta}\pg \frac{1}{n} \pd$ which is negative for $n$ large enough. Hence for $n$ large enough and all $x\in [e\beta, n/2]$ we have $\ln f(x) \leq 0$, i.e.\ $f(x)\leq 1$. The result follows. 
\end{proof}
 We now bound the right hand side of \eqref{eq: bound qnl with hap prop}. We first decompose the sum into simpler sub-sums. Below the symbol $\sqcup$ is used for disjoint unions.
\begin{lemma}\label{lem: decomposition cS into 3}
    Let $n\geq 4$ and $\ell \geq n\ln n$. Then $\cS = \cS_1 \sqcup \cS_2 \sqcup \cS_3$, where
\begin{align*}
    & \cS_1 = \ag (n_1,n_2,\ell_1,\ell_2) \in \cS(n,\ell) \mid n_1\ln n_1\leq \ell_1 \textup{ and  } n_2\ln n_2\leq \ell_2 \ad, \\
    &  \cS_2 = \ag (n_1,n_2,\ell_1,\ell_2) \in \cS(n,\ell) \mid n_1\ln n_1> \ell_1 \textup{ and  } n_2\ln n_2\leq \ell_2 \ad, \\
     &  \cS_3 = \ag (n_1,n_2,\ell_1,\ell_2) \in \cS(n,\ell) \mid n_1\ln n_1\leq \ell_1 \textup{ and  } n_2\ln n_2> \ell_2 \ad.
\end{align*}
\end{lemma}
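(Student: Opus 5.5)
The statement is a partition claim: the three sets are pairwise disjoint, contained in $\cS$, and their union is all of $\cS$. I will read the definition of $\cS$ with $\ell_1+\ell_2=\ell$, which is clearly what is intended; the printed ``$\ell_1+\ell_2=n$'' is a typo. Disjointness is immediate, since the three sets are distinguished by which of the two inequalities $n_1\ln n_1\le \ell_1$ and $n_2\ln n_2\le\ell_2$ hold. Only the combination ``$n_1\ln n_1>\ell_1$ and $n_2\ln n_2>\ell_2$'' is absent from the list. So the whole proof reduces to showing that this fourth case cannot occur when $\ell\ge n\ln n$.

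To rule it out, suppose $(n_1,n_2,\ell_1,\ell_2)\in\cS$ satisfies both strict inequalities. Adding them gives
\begin{equation*}
\ell=\ell_1+\ell_2< n_1\ln n_1+n_2\ln n_2 .
\end{equation*}
The right-hand side is at most $n\ln n$, because $n_i\le n$ gives $n_i\ln n_i\le n_i\ln n$ for $n_i\ge 1$. The case $n_1=0$ must be handled with the convention $0\ln 0=0$, which agrees with the limit of $x\ln x$ at $0$. With that convention, $0\ln 0> \ell_1$ fails because $\ell_1>0$, so this case cannot arise anyway. Summing, $n_1\ln n_1+n_2\ln n_2\le (n_1+n_2)\ln n = n\ln n\le \ell$, which contradicts the displayed strict inequality. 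Hence every element of $\cS$ lies in exactly one of $\cS_1,\cS_2,\cS_3$.

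No real obstacle is expected. The only points needing care are the typo in the definition of $\cS$ and the degenerate case $n_1=0$; in the latter, $0<\ell_1$ already forces $n_1\ln n_1\le\ell_1$.
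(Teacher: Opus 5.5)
Your proof is correct and follows the paper's argument: both reduce the claim to showing that the fourth case (both strict inequalities) is empty, by summing to get $\ell < n_1\ln n_1 + n_2\ln n_2 \le n\ln n \le \ell$. The paper justifies the middle inequality by convexity of $x\mapsto x\ln x$, whereas you use the more elementary bound $\ln n_i\le \ln n$, which works equally well; your reading $\ell_1+\ell_2=\ell$ of the typo and your convention $0\ln 0=0$ for the case $n_1=0$ are both correct.
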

\begin{proof}
Let $\cS_4 = \ag (n_1,n_2,\ell_1,\ell_2) \in \cS(n,\ell) \mid n_1\ln n_1> \ell_1 \textup{ and  } n_2\ln n_2> \ell_2 \ad$. Clearly, $\cS = \bigsqcup_{i=1}^4 \cS_i$. However, since $\ell \geq n\ln n$, by convexity of $x\mapsto x\ln x$ it is not possible to have both $n_1 \ln n_1 > \ell_1$ and $n_2 \ln n_2 > \ell_2$, that is, we have $\cS_4 = \emptyset$, which concludes the proof.
\end{proof}

We are now ready to prove Proposition~\ref{prop: final o(1) asymptotic bound on qnl}.

\begin{proof}[Proof of Proposition~\ref{prop: final o(1) asymptotic bound on qnl}]
    By Lemma~\ref{lem: decomposition cS into 3} and changing variables, we may rewrite
\begin{equation}\label{eq: bound qnl with hap lem sum S i}
         n!\sum_{(n_1, n_2, \ell_1, \ell_2)\in\cS}\binom{n}{n_1}\binom{\ell}{\ell_1} \frac{\binom{n_1}{2}^{\ell_1}\binom{n_2}{2}^{\ell_2}}{\binom{n}{2}^\ell}\hap_{n_1, \ell_1}\hap_{n_2, \ell_2} = \sum_{i=1}^3 \sum_{k=1}^{\lfloor n/2 \rfloor} S_i(k),
\end{equation}   
where for $i\in \ag 1, 2, 3 \ad$ and $1\leq k\leq \lfloor n/2 \rfloor$, 
\begin{equation}
    S_i(k) =  n!\sum_{s \du (k, n-k, s, \ell - s)\in\cS_i}\binom{n}{k}\binom{\ell}{s} \frac{\binom{k}{2}^{s}\binom{n-k}{2}^{\ell - s}}{\binom{n}{2}^\ell}\hap_{k, s}\hap_{n-k, \ell-s}.
\end{equation}
Our goal is therefore to bound $\sum_{k=1}^{\lfloor n/2 \rfloor} S_i(k)$ for each $i\in \ag 1, 2, 3 \ad$. In this proof we use the notation $f_n \lesssim g_n$, which has the same meaning as $f_n = O(g_n)$.

\paragraph*{First case: $i=1$.}
Applying Proposition~\ref{prop_bound_Hnl} (a) to both $\hap_{k, s}$ and $\hap_{n-k, \ell-s}$ in the sum, we obtain that uniformly for all $1\leq k\leq \lfloor n/2 \rfloor$,
\begin{equation}\label{eq: some intermediate bound on S 1}
\begin{split}
    S_1(k) & \lesssim n!\sum_{s \du (k, n-k, s, \ell - s)\in \cS_1}\binom{n}{k}\binom{\ell}{s} \frac{\binom{k}{2}^{s}\binom{n-k}{2}^{\ell - s}}{\binom{n}{2}^\ell}\frac{1}{k!}\frac{1}{(n-k)!} \\
       & = \binom{n}{k}^2\sum_{s \du (k, n-k, s, \ell - s)\in\cS_1}\binom{\ell}{s}\frac{\binom{k}{2}^{s}\binom{n-k}{2}^{\ell-s}}{\binom{n}{2}^{\ell}} \\
       & \lesssim \binom{n}{k}^2 \sum_{s \du (k, n-k, s, \ell - s)\in\cS_1}\binom{\ell}{s}\bigpar{(k/n)^2}^{s}\bigpar{(1-k/n)^2}^{\ell-s},
\end{split}
        \end{equation}
and therefore, using also that $\binom{n}{k} \leq (en/k)^k$, and that “$1-x\leq e^{-x}$”, that
\begin{equation}
    S_1(k) \lesssim \binom{n}{k}^2 \bigpar{(k/n)^2+(1-k/n)^2}^{\ell} = \binom{n}{k}^2 \bigpar{1-2(k/n)(1-k/n)}^{\ell} 
    \leq (en/k)^{2k} a_k^\ell,
\end{equation}
where $a_k = e^{-2(k/n)(1-k/n)} \leq e^{-k/n}$.
Now, uniformly for $1\leq k\leq n/2$, we have 
\begin{equation}
    a_k^{n\ln n} = \pg e^{-2(k/n)(1-k/n)}\pd^{n\ln n} = e^{-2k \ln n} e^{(2k^2/n)\ln n} = n^{-2k} n^{2k^2/n},
\end{equation}
so, using Lemma~\ref{lem: study of an auxiliary function}, 
\begin{equation}
    (en/k)^{2k} a_k^{n\ln n} = (e/k)^{2k} n^{2k^2/n} = \pg (e/k)^{k} n^{k^2/n} \pd^2 = O(1),
\end{equation}
and therefore,
\begin{equation}
    (en/k)^{2k} a_k^\ell = (en/k)^{2k} a_k^{n\ln n} a_k^{n\omega_n} \lesssim a_k^{n\omega_n} \leq \pg e^{-k/n}\pd^{n\omega_n} = \pg e^{-\omega_n} \pd^k.
\end{equation}
It follows that $\sum_{k=1}^{\lfloor n/2 \rfloor} S_1(k)\lesssim \sum_{k=1}^{\lfloor n/2 \rfloor} \pg e^{-\omega_n} \pd^k = o(1)$.

\paragraph*{Second case: $i=2$.}
For this case we bound $\hap_{k, s}$ and $\hap_{n-k, \ell-s}$ using Proposition~\ref{prop_bound_Hnl} (b) and (a) respectively (instead of (a) for both in the first case). This gives (in the sum) $\hap_{k, s} \leq C^k e^{-s} = \frac{1}{k!} \pg k!C^k e^{-s}\pd$ for some universal constant $C>0$, and $\hap_{n-k, \ell-s} = O(1)/(n-k)!$.
Following the same steps as in \eqref{eq: some intermediate bound on S 1}, we obtain that uniformly for all $1\leq k \leq n/2$, 

\begin{equation}
\begin{split}
    S_2(k) & \lesssim \binom{n}{k}^2 \sum_{s \du (k, n-k, s, \ell - s)\in\cS_2}\binom{\ell}{s}\bigpar{\frac{k}{n}}^{2s}\bigpar{1-\frac{k}{n}}^{2(\ell-s)} \pg k!C^k e^{-s}\pd \\
    & = C^k \binom{n}{k} \frac{n!}{(n-k)!}\sum_{s \du (k, n-k, s, \ell - s)\in\cS_2}\binom{\ell}{s}\bigpar{\frac{(k/n)^2}{e}}^{s}\bigpar{(1-k/n)^2}^{\ell-s},
\end{split}
\end{equation}
and therefore that
\begin{equation}
    S_2(k) \lesssim C^k\binom{n}{k} n^k  \bigpar{\frac{(k/n)^2}{e}+(1-k/n)^2}^\ell \leq \pg \frac{eCn^2}{k} \pd^k b_k^\ell,
\end{equation}
where $b_k = e^{-2k/n + (1+1/e)(k/n)^2} \leq e^{-k/(2n)}$.

We conclude proceeding as in the first case that $\sum_{k=1}^{\lfloor n/2 \rfloor} S_2(k)\lesssim \sum_{k=1}^{\lfloor n/2 \rfloor} \pg e^{-\omega_n /2} \pd^k = o(1)$.

\paragraph*{Third case: $i=3$.}
For this last case, we use Proposition~\ref{prop_bound_Hnl} (a) and (b) respectively we obtain that $\hap_{k, s} = O(1)/k!$ and $\hap_{n-k, \ell-s} = e^{O(n)}e^{-(\ell-s)}$, which leads to, uniformly for $1\leq k\leq \lfloor n/2 \rfloor$,
\begin{equation}
     S_3(k) \leq e^{O(n)} \binom{n}{k}^2(n-k)!  \bigpar{(k/n)^2+\frac{(1-k/n)^2}{e}}^\ell.
\end{equation}
Moreover, uniformly for $1\leq k\leq \lfloor n/2 \rfloor$ we have $(k/n)^2+\frac{(1-k/n)^2}{e} 
\leq 1/e$ and $\binom{n}{k}^2(n-k)! \leq 4^n n^n$, so that
\begin{equation}
    S_3(k) \leq e^{O(n)} e^{n\ln n} e^{-\ell} = e^{O(n)} e^{-n\omega_n}.
\end{equation}
We deduce that $\sum_{k=1}^{\lfloor n/2 \rfloor} S_3(k) \leq n e^{O(n)}e^{-n\omega_n} = o(1)$, which concludes the proof of the third case, and hence of the proof.
\end{proof}

\subsection{Proofs of Proposition~\ref{prop_connected} and Theorem~\ref{thm_main}}

We claimed earlier that Proposition Proposition~\ref{prop: final o(1) asymptotic bound on qnl} is stronger than Proposition~\ref{prop_connected}, and that Theorem~\ref{thm_main} follows from Propositions~\ref{prop_mixing} and~\ref{prop_connected}. We now prove these claims.

\begin{proof}[Proof of Proposition~\ref{prop_connected}]
    By definition (see \eqref{eq: def h split} and \eqref{eq: def q n ell}), for any $n, \ell$ and $\sigma\in \kS_n$, we have
\begin{equation}
    \bbP(X_\ell = \sigma \text{ and the graph is \textbf{not} connected }| X_0 = \Id) = h^{\textup{spl}}_{n,\ell}(\sigma) \leq \max_{\phi \in \kS_n}h^{\textup{spl}}_{n,\ell}(\phi) =  \frac{q_{n,\ell}}{n!}.
\end{equation}
The result therefore follows immediately from Proposition~\ref{prop: final o(1) asymptotic bound on qnl}.
\end{proof}

\begin{proof}[Proof of Theorem~\ref{thm_main}]
Set $\omega_n = \frac{2g_n}{n}-\ln n$ for $n\geq 1$. By assumption we have $\omega_n \to \infty$ as $n\to \infty$. Recall that there are $n!/z_\lambda$ permutations of $\kS_n$ with a given cycle type $\lambda$.

As $n\to \infty$, uniformly over all $\sigma, \phi \in \kS_n$, and all $\ell \geq 2g_n$, since the number of $\ell$-tuples of transpositions satisfying \eqref{eq_def_hurwitz} and the connectivity assumption below \eqref{eq_def_hurwitz} is by definition equal to $\binom{n}{2}^\ell \bbP(X_\ell = \sigma \text{ and the graph \textbf{is connected} }| X_0 = \Id)$, by Propositions~\ref{prop_mixing} and~\ref{prop_connected} it is asymptotically equivalent to $\frac{2}{n!} \binom{n}{2}^\ell$.
Summing this estimate over all $\sigma, \phi$ with respective given cycle types $\lambda, \mu$, we deduce that
\begin{equation}
    h_g(\lambda, \mu) \sim \frac{n!}{z_\lambda}\frac{n!}{z_\mu} \frac{2}{n!} \binom{n}{2}^{k_n} = \frac{2n!}{z_\lambda z_\mu} \binom{n}{2}^{k_n},
\end{equation}
where $k_n = 2g_n - 2 + \ell(\lambda) + \ell(\mu) \geq 2g_n$, which concludes the proof.
\end{proof}

\footnotesize{
\bibliographystyle{alpha}
\bibliography{biblioBL}

@article {ChapuyLoufWalsh2024PlancherelHurwitz,
    AUTHOR = {Chapuy, Guillaume and Louf, Baptiste and Walsh, Harriet},
     TITLE = {Random partitions under the {P}lancherel-{H}urwitz measure,
              high-genus {H}urwitz numbers and maps},
   JOURNAL = {Ann. Probab.},
  FJOURNAL = {The Annals of Probability},
    VOLUME = {52},
      YEAR = {2024},
    NUMBER = {4},
     PAGES = {1225--1252},
      ISSN = {0091-1798,2168-894X},
   MRCLASS = {60C05 (05A05 05A17 60B15 60D05)},
  MRNUMBER = {4766834},
MRREVIEWER = {Yuval\ Filmus},
       DOI = {10.1214/23-aop1651},
       URL = {https://doi.org/10.1214/23-aop1651},
}

@misc{BarLou,
 author = {Simon Barazer and Baptiste Louf},
 title = {High genus one part monotone {Hurwitz} numbers},
 year = {2026},
 howpublished = {Preprint, {arXiv}:2604.01190 [math.{CO}] (2026)},
 url = {https://arxiv.org/abs/2604.01190},
 arXiv = {arXiv:2604.01190}
}

@article{hurwitz_ueber_1891,
	title = {Ueber {Riemann}'sche {Flächen} mit gegebenen {Verzweigungspunkten}},
	volume = {39},
	issn = {1432-1807},
	url = {https://doi.org/10.1007/BF01199469},
	doi = {10.1007/BF01199469},
	language = {de},
	number = {1},
	urldate = {2026-09-17},
	journal = {Mathematische Annalen},
	author = {Hurwitz, A.},
	month = mar,
	year = {1891},
	pages = {1--60},
}

@article{EMS,
 author = {Eynard, Bertrand and Mulase, Motohico and Safnuk, Bradley},
 title = {The {Laplace} transform of the cut-and-join equation and the {Bouchard}-{Mari{\~n}o} conjecture on {Hurwitz} numbers},
 fjournal = {Publications of the Research Institute for Mathematical Sciences, Kyoto University},
 journal = {Publ. Res. Inst. Math. Sci.},
 issn = {0034-5318},
 volume = {47},
 number = {2},
 pages = {629--670},
 year = {2011},
 language = {English},
 doi = {10.2977/PRIMS/47},
 zbMATH = {5931054},
 Zbl = {1225.14022}
}

@article{Agg2,
 author = {Aggarwal, Amol},
 title = {Large genus asymptotics for volumes of strata of abelian differentials},
 fjournal = {Journal of the American Mathematical Society},
 journal = {J. Am. Math. Soc.},
 issn = {0894-0347},
 volume = {33},
 number = {4},
 pages = {941--989},
 year = {2020},
 language = {English},
 doi = {10.1090/jams/947},
 zbMATH = {7268735},
 Zbl = {1452.14025}
}

@article{Agg,
 author = {Aggarwal, Amol},
 title = {Large genus asymptotics for intersection numbers and principal strata volumes of quadratic differentials},
 fjournal = {Inventiones Mathematicae},
 journal = {Invent. Math.},
 issn = {0020-9910},
 volume = {226},
 number = {3},
 pages = {897--1010},
 year = {2021},
 language = {English},
 doi = {10.1007/s00222-021-01059-9},
 zbMATH = {7433733},
 Zbl = {1480.14020}
}

@article{MirZog,
 author = {Mirzakhani, Maryam and Zograf, Peter},
 title = {Towards large genus asymptotics of intersection numbers on moduli spaces of curves},
 fjournal = {Geometric and Functional Analysis. GAFA},
 journal = {Geom. Funct. Anal.},
 issn = {1016-443X},
 volume = {25},
 number = {4},
 pages = {1258--1289},
 year = {2015},
 language = {English},
 doi = {10.1007/s00039-015-0336-5},
 url = {hdl.handle.net/21.11116/0000-0004-1461-8},
 zbMATH = {6483699},
 Zbl = {1327.14136}
}

@article{angel_local_2013,
	title = {The local limit of unicellular maps in high genus},
	volume = {18},
	number = {86},
	journal = {Electron. Commun. Probab.},
	author = {Angel, O. and Chapuy, Guillaume and Curien, Nicolas and Ray, Gourab},
	year = {2013},
	pages = {1--8},
}

@article{mirzakhani_weil-petersson_2010,
	title = {On {Weil}-{Petersson} {Volumes} and {Geometry} of {Random} {Hyperbolic} {Surfaces}},
	journal = {Proceedings of the ICM 2010},
	author = {Mirzakhani, Maryam},
	year = {2010},
}

@misc{EPLFW,
 author = {Andrew Elvey-Price and Wenjie Fang and Baptiste Louf and Michael Wallner},
 title = {Bivariate asymptotics via random walks: application to large genus maps},
 year = {2026},
 howpublished = {Preprint, {arXiv}:2506.06924 [math.{CO}] (2026)},
 url = {https://arxiv.org/abs/2506.06924},
 arXiv = {arXiv:2506.06924}
}

@article{Delecroix_Goujard_Zograf_Zorich_2022, title={Large genus asymptotic geometry of random square-tiled surfaces and of random multicurves}, volume={230}, ISSN={0020-9910, 1432-1297}, DOI={10.1007/s00222-022-01123-y},  note={arXiv:2007.04740 [math.GT]}, number={1}, journal={Inventiones mathematicae}, author={Delecroix, Vincent and Goujard, Elise and Zograf, Peter and Zorich, Anton}, year={2022}, month=oct, pages={123–224} }

@article{BudzLouf,
 author = {Budzinski, Thomas and Louf, Baptiste},
 title = {Local limits of uniform triangulations in high genus},
 fjournal = {Inventiones Mathematicae},
 journal = {Invent. Math.},
 issn = {0020-9910},
 volume = {223},
 number = {1},
 pages = {1--47},
 year = {2021},
 language = {English},
 doi = {10.1007/s00222-020-00986-3},
 zbMATH = {7306969},
 Zbl = {1461.05182}
}

@article{mirzakhani_lengths_2019,
	title = {Lengths of closed geodesics on random surfaces of large genus},
	volume = {94},
	issn = {0010-2571},
	url = {https://doi.org/10.4171/cmh/477},
	doi = {10.4171/cmh/477},
	number = {4},
	journal = {Commentarii Mathematici Helvetici. A Journal of the Swiss Mathematical Society},
	author = {Mirzakhani, Maryam and Petri, Bram},
	year = {2019},
	mrnumber = {4046008},
	pages = {869--889},
}

@article{guth_pants_2011,
	title = {Pants decompositions of random surfaces},
	volume = {21},
	issn = {1016-443X},
	url = {https://doi.org/10.1007/s00039-011-0131-x},
	doi = {10.1007/s00039-011-0131-x},
	number = {5},
	journal = {Geometric and Functional Analysis},
	author = {Guth, Larry and Parlier, Hugo and Young, Robert},
	year = {2011},
	mrnumber = {2846383},
	pages = {1069--1090},
}

@misc{hide_spectral_2025,
	title = {Spectral gap with polynomial rate for {Weil}-{Petersson} random surfaces},
	url = {http://arxiv.org/abs/2508.14874},
	doi = {10.48550/arXiv.2508.14874},
	
	urldate = {2026-07-10},
	publisher = {arXiv},
	author = {Hide, Will and Macera, Davide and Thomas, Joe},
	month = nov,
	year = {2025},
	note = {arXiv:2508.14874 [math.SP]},
}

@misc{anantharaman_friedman-ramanujan_2025,
	title = {Friedman-{Ramanujan} functions in random hyperbolic geometry and application to spectral gaps {II}},
	url = {http://arxiv.org/abs/2502.12268},
	doi = {10.48550/arXiv.2502.12268},
	urldate = {2026-09-02},
	publisher = {arXiv},
	author = {Anantharaman, Nalini and Monk, Laura},
	month = jun,
	year = {2025},
	note = {arXiv:2502.12268 [math.MG]},
}

@article{Ok,
 author = {Okounkov, Andrei},
 title = {Toda equations for {Hurwitz} numbers},
 fjournal = {Mathematical Research Letters},
 journal = {Math. Res. Lett.},
 issn = {1073-2780},
 volume = {7},
 number = {4},
 pages = {447--453},
 year = {2000},
 language = {English},
 doi = {10.4310/MRL.2000.v7.n4.a10},
 zbMATH = {1539310},
 Zbl = {0969.37033}
}

@misc{Yang-Hurwitz,
 author = {Chenglang Yang},
 title = {The structures of simple {Hurwitz} numbers and monotone {Hurwitz} numbers with varying genus},
 year = {2025},
 howpublished = {Preprint, {arXiv}:2503.01920 [math.{CO}] (2025)},
 url = {https://arxiv.org/abs/2503.01920},
 arXiv = {arXiv:2503.01920}
}

@misc{ALR,
 author = {Davide Accadia and Danilo Lewa{\'n}ski and Giulio Ruzza},
 title = {On the large genus of {Hurwitz} numbers},
 year = {2026},
 howpublished = {Preprint, {arXiv}:2604.12880 [math.{AG}] (2026)},
 url = {https://arxiv.org/abs/2604.12880},
 arXiv = {arXiv:2604.12880}
}

@misc{DHR,
 author = {Norman Do and Jian He and Heath Robertson},
 title = {The structure of {Hurwitz} numbers with fixed ramification profile and varying genus},
 year = {2024},
 howpublished = {Preprint, {arXiv}:2409.06655 [math.{CO}] (2024)},
 url = {https://arxiv.org/abs/2409.06655},
 arXiv = {arXiv:2409.06655}
}

@article{DYZ,
 author = {Dubrovin, Boris and Yang, Di and Zagier, Don},
 title = {Classical {Hurwitz} numbers and related combinatorics},
 fjournal = {Moscow Mathematical Journal},
 journal = {Mosc. Math. J.},
 issn = {1609-3321},
 volume = {17},
 number = {4},
 pages = {601--633},
 year = {2017},
 language = {English},
 url = {www.mathjournals.org/mmj/2017-017-004/2017-017-004-003.html},
 zbMATH = {7069274},
 Zbl = {1411.14059}
}

@article{ELSV,
 author = {Ekedahl, Torsten and Lando, Sergei and Shapiro, Michael and Vainshtein, Alek},
 title = {Hurwitz numbers and intersections on moduli spaces of curves.},
 fjournal = {Inventiones Mathematicae},
 journal = {Invent. Math.},
 issn = {0020-9910},
 volume = {146},
 number = {2},
 pages = {297--327},
 year = {2001},
 language = {English},
 doi = {10.1007/s002220100164},
 zbMATH = {1777217},
 Zbl = {1073.14041}
}

@article {DiaconisShahshahani1981,
    AUTHOR = {Diaconis, Persi and Shahshahani, Mehrdad},
     TITLE = {Generating a random permutation with random transpositions},
   JOURNAL = {Z. Wahrsch. Verw. Gebiete},
  FJOURNAL = {Zeitschrift f\"{u}r Wahrscheinlichkeitstheorie und Verwandte
              Gebiete},
    VOLUME = {57},
      YEAR = {1981},
    NUMBER = {2},
     PAGES = {159--179},
      ISSN = {0044-3719},
   MRCLASS = {60C05 (60J15)},
  MRNUMBER = {626813},
MRREVIEWER = {Lars Holst},
       DOI = {10.1007/BF00535487},
       URL = {https://doi.org/10.1007/BF00535487},
}

@article {JainSawhney2024transpositionprofileotherproof,
    AUTHOR = {Jain, Vishesh and Sawhney, Mehtaab},
     TITLE = {Hitting time mixing for the random transposition walk},
    YEAR = {2024+},
   JOURNAL = {Arxiv preprint \textup{\url{https://arxiv.org/abs/2410.23944}}},
       URL = {},
}

@article {Hough2016,
    AUTHOR = {Hough, Bob},
     TITLE = {The random {$k$} cycle walk on the symmetric group},
   JOURNAL = {Probab. Theory Related Fields},
  FJOURNAL = {Probability Theory and Related Fields},
    VOLUME = {165},
      YEAR = {2016},
    NUMBER = {1-2},
     PAGES = {447--482},
      ISSN = {0178-8051},
   MRCLASS = {60J10 (20C30 60B15)},
  MRNUMBER = {3500276},
MRREVIEWER = {Sara Brofferio},
       DOI = {10.1007/s00440-015-0636-6},
       URL = {https://doi.org/10.1007/s00440-015-0636-6},
}

@book {LivreLevinPeres2019MarkovChainsAndMixingTimesSecondEdition,
    AUTHOR = {Levin, David A. and Peres, Yuval},
     TITLE = {Markov chains and mixing times},
      NOTE = {Second edition of [ MR2466937],
              With contributions by Elizabeth L. Wilmer,
              With a chapter on ``Coupling from the past'' by James G. Propp
              and David B. Wilson},
 PUBLISHER = {American Mathematical Society, Providence, RI},
      YEAR = {2017},
     PAGES = {xvi+447},
      ISBN = {978-1-4704-2962-1},
   MRCLASS = {60J10 (60-01 60B15 60C05 60J27 60K35 68U20 82C22)},
  MRNUMBER = {3726904},
       DOI = {10.1090/mbk/107},
       URL = {https://doi.org/10.1090/mbk/107},
}

@book {LivreDiaconis1988,
    AUTHOR = {Diaconis, Persi},
     TITLE = {Group representations in probability and statistics},
    SERIES = {Institute of Mathematical Statistics Lecture Notes---Monograph
              Series},
    VOLUME = {11},
 PUBLISHER = {Institute of Mathematical Statistics, Hayward, CA},
      YEAR = {1988},
     PAGES = {vi+198},
      ISBN = {0-940600-14-5},
   MRCLASS = {60-02 (20C99 62-02)},
  MRNUMBER = {964069},
MRREVIEWER = {Philippe Bougerol},
}

@article {Teyssier2025ProfilesConjugacyInvariant,
    AUTHOR = {Teyssier, Lucas},
     TITLE = {Cutoff profiles for conjugacy invariant random walks on symmetric groups},
    JOURNAL = {arXiv preprint \textup{\url{https://arxiv.org/abs/2605.28770}}},
        YEAR = {2026+},
       URL = {},
}
}
\end{document}